\documentclass[a4paper,11pt,reqno]{article}
\usepackage[T1]{fontenc}
\usepackage{courier}
\usepackage[scaled=0.92]{helvet}
\usepackage{amscd,amsmath,amssymb,amsthm,amsfonts,epsfig,graphics,accents}

\usepackage[percent]{overpic} 				
\usepackage[english]{babel}
\usepackage{algpseudocode,algorithm,float}
\usepackage{amsthm,amsfonts,amssymb,amsmath,subfigure}
\usepackage{hyperref}
\hypersetup{
colorlinks=true,
citecolor=red,
linkbordercolor={1 1 1},
citebordercolor={1 1 1},
}
\usepackage{cleveref}
\usepackage{mathrsfs}
\usepackage{color}
\usepackage{bm}
\usepackage{enumitem}
\usepackage{algorithm}
\usepackage{algpseudocode}
\usepackage{float}
\usepackage[most]{tcolorbox} 
\usepackage{graphicx}
\usepackage{float}

\usepackage[scale=2]{draftwatermark}
\SetWatermarkLightness{1} 
\SetWatermarkAngle{45}
\SetWatermarkText{DRAFT}

\usepackage{xcolor} 

\newcommand{\mx}{\mbox}
\newcommand{\rw}{\rightarrow}

\newcommand{\ml}{\mathcal}

\newcommand{\pl}{\partial}

\newcommand{\beq}[1]{\begin{equation} \label{#1}}
\newcommand{\eeq}{\end{equation}}
\newcommand{\beqar}{\[ \begin{array}{rcl}}
\newcommand{\eeqar}{\end{array} \]}

\providecommand{\CC}{\mathbb{C}}

\newtheorem{satz}{Theorem}[section]
\newtheorem{prop}{Proposition}[section]

\newtheorem{lem}[satz]{Lemma}

\newtheorem{defn}[satz]{Definition}
\newtheorem{rem}{Remark}[section]

\newtheorem{ex}[satz]{Example}

\usepackage{xcolor}

\usepackage{xcolor}

\providecommand{\CC}{\mathbb{C}}

\newcommand{\snorm}[2]{\left| #1\right|_{#2}}
\newcommand{\norm}[2]{\left \lVert#1 \right\rVert_{#2}}

\usepackage[utf8]{inputenc}

\hypersetup{pdfauthor={Fortunati, A. and Pacella F.},%
            pdftitle={A Constructive Approach  ...},%
            pdfsubject={},%
            pdfkeywords={}%
}

\makeatletter
\renewcommand*{\@fnsymbol}[1]{\ensuremath{\ifcase#1\or *\or \mathsection \or \dagger \or \else \fi}}
\makeatother

\title{\LARGE{\textbf{{A Constructive Approach to a Class of Overdetermined Problems}}}}
\date{}
\author{
Alessandro Fortunati\thanks{DICCA -- Dipartimento di Ingegneria Civile, Chimica e Ambientale, Università degli Studi di Genova, Via Montallegro 1, 16145 Genova, Italy (see Acknowledgements).
}
\and
Filomena Pacella\thanks{Dipartimento di Matematica Guido Castelnuovo, Sapienza Università di Roma, Piazzale Aldo Moro 5, 00185 Roma, Italy. Email: filomena.pacella@uniroma1.it}
}

\usepackage{amsmath}
\usepackage{graphicx}
\begin{document}

\maketitle

\begin{abstract}
In this paper we study an overdetermined problem which is directly related to the well known torsion problem studied by J. Serrin. A perturbed version of the latter is tackled by using asymptotic series as well as tools borrowed from the celebrated Nekhoroshev Theorem. In a similar fashion to this class of results, we establish the existence of infinitely many approximants for the perturbed problem's solution, whose approximation error is so small which can be regarded as negligible for practical applications. The approach is fully constructive and this feature is demonstrated via an example in the final section.      
\end{abstract}
\bigskip
\begin{center}
\begin{minipage}{\dimexpr\textwidth-2cm\relax}
\small
\noindent{\it Keywords:} Overdetermined problems, Elliptic problems, Asymptotic Series, Nekhoroshev Theorem.

\smallskip
\noindent{\it 2010 MSC}. Primary: 35N25, 35B20. Secondary: 35J25, 37J40.
\end{minipage}
\end{center}
\bigskip

\section{Introduction}
In the last decades, overdetermined problems have given rise to many interesting questions in Mathematics and its applications. The first result is the famous symmetry result of J. Serrin \cite{serrin} about the following problem
\beq{eq:unperturbed}
\left\{
\begin{array}{rcl}
-\Delta u & = & 1 \qquad \mbox{on} \quad \Omega\\
u &=& 0 \qquad \mbox{on} \quad  \partial \Omega\\
\partial_{\nu}  u &=& C \qquad \mbox{on} \quad \partial \Omega\\  
\end{array}
\right. \mx{,}
\eeq
where $\Omega$ is a bounded smooth domain in $\mathbb{R}^N$, $\nu$ is the inner normal to $\partial \Omega$ and $C$ is a positive constant. \\
Problem (\ref{eq:unperturbed}) is often called \emph{torsion problem} because of its physical motivations, see \cite{serrin}.\\
The classical Serrin's result claims that the only smooth domain $\Omega$ admitting a solution of (\ref{eq:unperturbed}) is a ball with radius $R>0$ depending solely on the constant $C$, which prescribes the value of the normal derivative.       \\
This result has been extended in various directions: more general elliptic operators, nonlinear problems, quantitative versions, relative settings, non-smooth domains. etc. and the corresponding literature is extremely large. As for the mentioned works, we refer to the recent papers \cite{figalli}, \cite{pac} and references therein, as well as to the surveys \cite{fifteen} and \cite{sixteen}.\\
In this paper we consider a ``perturbed torsion problem'' where the Laplacian operator is perturbed by adding a small potential and the domain is a perturbed ball in $\mathbb{R}^2$. More precisely, we study the problem:
\beq{eq:perturbedserrin}
\left\{
\begin{array}{rclll}
-\Delta u+\mu w u &=&1&\text{on}&\Omega_{\mu}\\
u&=&0&\text{on}&\partial\Omega_{\mu}\\
\partial_{\nu}u&=&C&\text{on}&\partial\Omega_{\mu}
\end{array}
\right. \mx{,}
\eeq
where $u=u(r,\theta)$, $w=w(r,\theta)$ with $(r,\theta)$ is the standard set of polar coordinates of $\mathbb{R}^2$, and 
\[
\Omega_{\mu}:=\{(r,\theta): 0 \leq r \leq 1+\mu g(\theta): \, \theta \in \mathbb{T} \}  \mx{,}
\] 
being $\mu$ a ``small'' fixed positive parameter (in other terms $0 < \mu \ll 1$), $\mathbb{T} \equiv [0,2\pi]$ and $g:\mathbb{T} \mapsto \mathbb{R}$. The regularity of the above mentioned functions will be discussed later on.\\
Our aim is to show that for sufficiently small $\mu$, we can find domains $\Omega_{\mu}$ (which are perturbed balls) and infinitely many potentials $w^*$ for which we can produce approximated solutions of (\ref{eq:perturbedserrin}).\\
To do this, we use a constructive approach based on truncated expansions in power series which allow to get precise estimates of their remainders. \\
Our result can be seen as a quantitative version of Serrin's problem in the presence of a perturbed Laplace operator. Note that more general elliptic operators have been considered in \cite{serrin} and other papers, but they do not cover our  case.\\
We postpone to Sec. \ref{sec:two} the precise setting of the problem and the statement of the main result. 
From a more technical viewpoint, the paper relies on the possibility to construct approximated solutions by means of asymptotic series, which essentially go back to Poincaré. These, have been profitably used in the context of the well known Nekhoroshev Theorem \cite{Nek77}, \cite{Nek79}, concerning the stability of nearly-integrable Hamiltonian systems, with several remarkable applications, e.g., in the field of Celestial Mechanics, see also \cite{GalGio84}. \\
The approach has been shown to be successful in radically different problems involving perturbed PDEs, see e.g. \cite{ForBacMis24}. In the latter, the approximation error is ``so small'' with respect to the perturbation parameter, such that the constructed function turn out to be practically indistinguishable from the real solution (provided it exists). The strategy is classical but with peculiar technicalities: Firstly, the problem is formally reduced to a hierarchy of explicitly resolvable problems, afterwards, the approximation features of the constructed objects are studied in a suitable function space. Is some cases, see e.g. \cite{For18}, the employed scheme can be shown to be convergent to the actual solution of the problem at hand. A key property of this approach lies in its constructiveness, as the proof possesses the structure of an algorithm, which can be used to build those approximants explicitly. A fully detailed example will be presented in the final section of this work to demonstrate this fundamental property, together with a plot of the constructed solution. \\  
To the best of our knowledge, this is the first time a constructive approach of this type has been employed in the context of overdetermined problems.  

\section{Problem set-up and main result}\label{sec:two}
Let us consider the overdetermined problem (\ref{eq:perturbedserrin}) supposing  $C=1/2$ throughout the paper for simplicity and without loss of generality.\\
Fixed $\mu$, let us define $\mathcal{I}_{\mu}:=[0,1+\mu \max_{\theta \in \mathbb{T}}\snorm{g(\theta)}{}]$. The paper is concerned with the construction of approximants of potential solutions to (\ref{eq:perturbedserrin}) in the form of truncated power series 
\[
\begin{aligned}
u(r,\theta,\mu)& =u_0(r)+\mu u_1(r,\theta)+\mu^2 u_2(r,\theta)+\ldots + \mu^K u_K(r,\theta) \\
w(r,\theta,\mu)& =w_1(r,\theta)+\mu w_2(r,\theta)+\ldots + \mu^{K-1} w_K(r,\theta)
\end{aligned}
\mx{,}
\]
where $K \in \mathbb{N} \setminus \{0\}$ to be determined, $u_j,w_j: D_{\mu} \mapsto \mathbb{R}$, with $D_{\mu}:=\mathcal{I}_{\mu} \times \mathbb{T} \mapsto \mathbb{R}$. We assume this structure for all $j \geq 1$ (and not only $1 \leq j \leq K$) as in some parts of the paper (e.g. Sec. \ref{sec:formal}), the above mentioned (finite) series will be more easily treated as infinite series (i.e. Taylor expansions).\\    
It is immediate to realise that the problem with $\mu=0$, is nothing but the classical Serrin's problem (\ref{eq:unperturbed}), whose solution is the pure radial function 
\beq{eq:radial}
u_0=(1-r^2)/4 \mx{,}
\eeq
if $C=1/2$. 
\begin{defn} Given $w(\theta,r)$ and $u(\theta,r)$, let us introduce the following functions
\begin{align}
\mathcal{R}(u,w) & :=-\Delta u(r,\theta) + \mu w(r,\theta) u(r,\theta) -1 \mx{,} \label{eq:firstrem}\\
\ml{E}_{D}(u) & :=u(r,\theta)|_{\pl \Omega_{\mu}} \mx{,} \label{eq:dbd}\\  
\ml{E}_{N}(u) & :=\left(\pl_{\nu}u(r,\theta)|_{\pl \Omega_{\mu}}-C \right) (1+\mu g) \sqrt{1+2 \mu g +\mu^2 \left(g^2+(g')^2\right)^2}  \label{eq:nbd} \mx{,}
\end{align}
which will be called ``Resolvability'', ``Dirichlet'' and ``Neumann'' defects, respectively. Clearly $u,w$ are solution of (\ref{eq:perturbedserrin}) iff the three defects vanish identically.  
\end{defn}
\medskip 
\noindent Let us now consider, for all $\rho,\sigma \in (0,1]$ the set $\ml{D}_{\rho,\sigma;\mu}:=\ml{G}_{\rho;\mu} \times \mathbb{T}_{\sigma}$, where
$$
\ml{G}_{\rho;\mu} := \bigcup_{r \in \mathcal{I}_{\mu}} \mathcal{B}_{\rho}(r), \quad 
\mathcal{B}_{\rho}(r):=\{\hat{r} \in \CC :\snorm{\hat{r}-r}{} \leq \rho\}, \quad \mathbb{T}_{\sigma} := \{\hat{\theta} \in \CC: |\Im \hat{\theta}| \leq  \sigma\} 
\mx{.}
$$
Here $\Im z$ denotes the imaginary part of the complex number $z$. 
This construction can be naturally interpreted as the complexification of $D_{\mu}$. In fact, it is immediate to realise that $D_{\mu} \equiv \lim_{(\rho,\sigma) \rw (0,0)} \ml{D}_{\rho,\sigma;\mu}$. \\
Let us now denote with $\mathfrak{H}_{\rho,\sigma;\mu}$ the space of continuous functions on $\ml{D}_{\rho,\sigma;\mu}$, holomorphic in its interior and real on the restriction $D_{\mu}$.
Looking for solutions in this class of functions allows us to deal with objects which can be treated as real functions of real variables (see Sec. \ref{sec:formal} or Sec. \ref{sec:example}) but possessing extra-properties (crucial for Sec. \ref{sec:quantitative}) inherited from the requested regularity in the above described complex extension.\\ 
Let $h \in \mathfrak{H}_{\rho,\sigma;\mu}$, we can expand $h=\hat{a}_0(r)+\sum_{n=1}^{+\infty}[\hat{a}_k(r) \cos (n \theta) + \hat{b}_k(r) \sin (n \theta)]$ and define the \emph{Fourier norm} 
\beq{eq:fouriernorm}
\norm{h}{\rho,\sigma}:=
\snorm{\hat{a}_0(r)}{\rho}+\sum_{n=1}^{+\infty}(\snorm{\hat{a}_n(r)}{\rho} + |\hat{b}_n(r)|_{\rho})e^{n \sigma}\mx{.}
\eeq
where $\snorm{f(r)}{\rho}:=\sup_{r \in \ml{G}_{\rho}}\snorm{f(r)}{}$.
\begin{rem}\label{rem:one}
Clearly, if $f=f(\theta)$ then $\snorm{f}{\rho} \equiv \snorm{f}{}$.
\end{rem}
\begin{rem}\label{rem:two}
As a standard result in the context of holomorphic functions, it is possible to show that if $h \in \mathfrak{H}_{\rho,\sigma;\mu}$, then $\norm{h}{\rho,\sigma'} < +\infty$, for all $\sigma' <\sigma$. Conversely, if $\norm{h}{\rho,\sigma} \leq \ml{K} \in \mathbb{R}$, then $|\hat{a}_n(r)|_{\rho}, |\hat{b}_n(r)|_{\rho} \leq \ml{K} \exp(-\sigma n)$. In particular, the series (\ref{eq:fouriernorm}) converges in the interior of $\ml{D}_{\rho,\sigma;\mu}$, hence $h \in \mathfrak{H}_{\rho,\sigma;\mu}$.    
\end{rem}
\begin{satz}\label{thm:main}%
Suppose $g \in \mathfrak{H}_{\rho,\sigma;\mu}$ and satisfying 
\beq{eq:g}
\norm{g(\theta)}{(\rho,\sigma)}, \norm{g'(\theta)}{(\rho,\sigma)} \leq G \equiv 1/4 \mx{.}
\eeq
Then there exists $\mu_0>0$ and infinitely many $w^* \in \mathfrak{H}_{\rho/2,\sigma/2;\mu}$ such that, for each $w^*$, one can determine a unique $u^* \in \mathfrak{H}_{\rho/2,\sigma/2;\mu}$ such that the pair $(u^*,w^*)$ satisfies the following bounds
\beq{eq:finalbound}
\mathscr{C}_1^{-1} \norm{\mathcal{R}(u^*,w^*)}{\left(\frac{\rho}{2},\frac{\sigma}{2}\right)}, 
\mathscr{C}_4^{-1} \norm{\mathcal{E}_D(u^*)}{\left(\frac{\rho}{2},\frac{\sigma}{2}\right)},
\mathscr{C}_5^{-1} \norm{\mathcal{E}_N(u^*)}{\left(\frac{\rho}{2},\frac{\sigma}{2}\right)}
\leq  \mu^{\mathscr{C}_2(\log(1/\mu)-2\log \mathscr{C}_3)}   \mx{,}
\eeq
for all $\mu \in (0,\mu_0]$ and some $O(1)$ real constants $\mathscr{C}_j$, i.e. independent of $\mu$, for $j=1,2,\ldots, 5$. In other terms, $(u^*,w^*)$ satisfy the problem (\ref{eq:perturbedserrin}) up to an error which is super-polynomially small with respect to $\mu$, i.e. is $o(\mu^q)$ as $\mu \rw 0$, no matter how large $q$ is chosen.
\end{satz}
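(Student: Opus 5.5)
The plan follows the Nekhoroshev-type scheme: build the truncated series order by order, estimate each term by Cauchy-type bounds with a geometric loss of analyticity, and then pick the truncation order $K$ as a function of $\mu$.

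\emph{Formal hierarchy.} Insert the expansions of $u$ and $w$ into the three defects. For the boundary defects $\mathcal{E}_D$ and $\mathcal{E}_N$, Taylor expand around $r=1$: every quantity evaluated at $r=1+\mu g(\theta)$ becomes a power series in $\mu$ whose coefficients are polynomials in $g$, $g'$ and the $r$-derivatives of the $u_j$ at $r=1$. Collecting powers of $\mu$ gives, at each order $j\ge1$, three equations:
\[
-\Delta u_j + \sum_{i+k=j-1} w_{i+1}u_k=0, \qquad u_j(1,\theta)=F_j(\theta), \qquad \partial_r u_j(1,\theta)=H_j(\theta),
\]
where $F_j$ and $H_j$ depend only on $g$ and on $u_0,\dots,u_{j-1}$, $w_1,\dots,w_{j-1}$.

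The key observation is that $w_j$ enters at order $j$ only through the term $w_j u_0$, with $u_0=(1-r^2)/4$ known. This gives the freedom needed. Choose $u_j$ as any function with the prescribed Cauchy data at $r=1$, for instance
\[
u_j = F_j(\theta) + (r-1)H_j(\theta) + (r-1)^2\phi_j(r,\theta),
\]
where $\phi_j$ is arbitrary; this arbitrariness is the source of the ``infinitely many'' $w^*$. Then define
\[
w_j := \frac{\Delta u_j - \sum_{\text{lower}} w_{i+1}u_k}{u_0}.
\]
The division is the delicate point, since $u_0$ vanishes at $r=1$, and $\mathcal{I}_\mu$ extends slightly beyond $r=1$. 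To handle it, I would choose $\phi_j$ so that the numerator vanishes at $r=1$. Evaluating the numerator at $r=1$ gives $2\phi_j + (\text{known terms}) = 0$, which fixes $\phi_j(1,\theta)$ and leaves the rest free. The numerator is then divisible by $(1-r^2)$ holomorphically, and $w_j$ is analytic on $\mathcal{G}_{\rho;\mu}$.

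\emph{Quantitative estimates.} Work in the Fourier norm \eqref{eq:fouriernorm} on a nested sequence of domains $\mathcal{D}_{\rho-j\delta,\sigma-j\delta'}$ with $\delta=\rho/(2K)$ and $\delta'=\sigma/(2K)$, so that after $K$ steps we land in $\mathfrak{H}_{\rho/2,\sigma/2;\mu}$. Each order costs at most two derivatives in $r$ and $\theta$, which are controlled by Cauchy estimates with factors $\delta^{-2}$. Division by $u_0$ costs a further $\delta^{-1}$, via a maximum-principle argument on the quotient $f/(1-r)$. The products of Taylor coefficients of $u_k$ at the boundary with powers of $g$ are controlled by \eqref{eq:g}.

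By induction I aim for the bound
\[
\|u_j\|,\ \|w_j\| \le \mathscr{C}^j\, j!^{\,2},
\]
or more precisely $\le (\mathscr{C}K)^{2j}$ in terms of the truncation order. This factorial-type growth is typical of the Nekhoroshev scheme. The main obstacle is this inductive estimate. Two things must be controlled: the combinatorics of the Taylor expansion of the boundary conditions, whose coefficients are compositions with $1+\mu g$ (I would bound the convolution sums by a majorant series), and the ensuring that the loss of analyticity per step is uniform.

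\emph{Remainder and choice of $K$.} With the construction above, all terms of order $\le K$ in $\mathcal{R}$, $\mathcal{E}_D$ and $\mathcal{E}_N$ cancel exactly. What remains consists of the terms of order $\mu^{K+1}$ and higher: the truncation tails in $\mathcal{R}$ and the Taylor remainders of the boundary expansions. Using the inductive bounds, the remainder is estimated by $(\mathscr{C}_3 K)^{2K}\mu^{K+1}$ up to constants. Choosing $K \sim \mu^{-1/2}/\mathscr{C}_3$ would give an exponentially small error. Instead, the stated form $\mu^{\mathscr{C}_2(\log(1/\mu)-2\log\mathscr{C}_3)}$ suggests choosing $K \sim \log(1/\mu)$, since then
\[
(\mathscr{C}_3K)^{2K}\mu^{K} = \exp\!\bigl(-K[\log(1/\mu) - 2\log(\mathscr{C}_3K)]\bigr).
\]
Setting $K=\lfloor \mathscr{C}_2\log(1/\mu)\rfloor$ and absorbing the $\log K$ term by taking $\mu\le\mu_0$ yields \eqref{eq:finalbound}.

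Uniqueness of $u^*$ for a given $w^*$ follows because, once the $w_j$ are fixed, the choices of the $\phi_j$ are determined order by order. Concretely, one takes $u^*$ to be the unique truncated series produced by the recursion with those $w_j$; reading the recursion in reverse, $w_j$ determines $\Delta u_j$, and together with the Cauchy data this determines $u_j$.
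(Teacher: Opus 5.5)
Your construction runs opposite to the paper's and can be made to work. As written, however, it has a genuine gap at the origin.

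\textbf{The gap.} You neutralise the zero of $u_0$ at $r=1$, but the claim that $w_j$ is analytic on $\mathcal{G}_{\rho;\mu}$ overlooks the coefficients $r^{-1}$, $r^{-2}$ of the polar Laplacian. The point $r=0$ lies both in $\Omega_\mu$ and in $\mathcal{G}_{\rho;\mu}$.
\begin{itemize}
\item For $u_j=F_j+(r-1)H_j+(r-1)^2\phi_j$ with $\phi_j$ free away from $r=1$, in general neither $\partial_r u_j(0,\cdot)$ nor $\partial_\theta^2u_j(0,\cdot)=\partial_\theta^2\bigl(F_j-H_j+\phi_j(0,\cdot)\bigr)$ vanishes.
\item Hence $\Delta u_j$ has poles at $r=0$. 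Since $u_0(0)=1/4$, they pass straight into $w_j$, which is then unbounded at the centre of the domain and not in $\mathfrak{H}$. Also $u_j$ itself is discontinuous there.
\end{itemize}
So $\phi_j$ is not arbitrary: you must also require $u_j(0,\cdot)$ constant and $\partial_ru_j(0,\cdot)\equiv0$.

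One way to do this is to take $u_j=r^2Q_j$ with $Q_j=\alpha_j+\beta_j(r-1)+\gamma_j(r-1)^2$. Here $\alpha_j=F_j$, $\beta_j=H_j-2F_j$, and $\gamma_j$ is fixed by the vanishing of the numerator at $r=1$. Put the free part in a term $r^2(r-1)^3\psi_j$ with $\psi_j$ of bounded norm, as in \eqref{eq:decayanm}. Then $\Delta u_j=4Q_j+5r\partial_rQ_j+r^2\partial_r^2Q_j+\partial_\theta^2Q_j$ is polynomial in $r$. In the paper this is automatic, because the bounded branch of \eqref{eq:azero}--\eqref{eq:bn} is selected.

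Likewise, $u_0$ also vanishes at $r=-1\in\mathcal{G}_{1;\mu}$. The claimed divisibility by $1-r^2$ therefore only holds on the shrunken domains, at the cost of a factor $(1-\rho_j)^{-1}=O(K)$.

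\textbf{Comparison with the paper's route.} Once repaired, your route genuinely differs from the paper's.
\begin{itemize}
\item The paper works mode by mode with the Cauchy--Euler equations. It spends the one constant $C_n^{(k)}$ left by boundedness, together with $a_{n,0}^{(k)}$, on the two boundary conditions, and gets infinitely many $w$ from the free $a_{n,m}^{(k)}$, $m\ge1$.
\item You prescribe the $2$-jet of $u_j$ at $r=1$ and obtain $w_j$ algebraically. This needs no ODE solving and keeps $u_j,w_j$ polynomial in $r$ uniformly in the Fourier index. So no $r^{\pm n}$ or $r^n\log r$ factors (cf.\ \eqref{eq:exampleu1}) have to be controlled on the complex $r$-domain.
\item The price is a larger derivative loss. $\gamma_j$ contains $F_j''$, and $\Delta u_j$ differentiates it twice more in $\theta$, so $w_j$ involves $F_j''''$. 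That is four $\theta$-derivatives, not the two you state. For comparison, the paper's $w_k$ carries a factor $n^3$ through $\chi_n(\widetilde A_n^{(k)\prime}-n\widetilde A_n^{(k)})$. Any fixed loss $p$ is harmless.
\end{itemize}

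\textbf{The inductive bound.} The estimate you leave as the main obstacle closes as in the paper. Write recurrences like \eqref{eq:recurrencesystem}, now with coefficients polynomial in $K$, and dominate them by the Catalan-type sequence \eqref{eq:evolutionzk}. This gives $U_j,W_j\le(\mathscr{C}K)^{pj}$.

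Under such a bound, the Taylor terms $\frac{g^h}{h!}\partial_r^hu_{j-h}$ are dominated by the $h=1$ term. The paper instead bounds them by $\mathcal{M}=e^{4K/(e\rho)}$. Those exponential factors in $a,b$ produce $e^{4\alpha K^2}$ in $F(K)$ and force $K_{\rm opt}\sim\log(1/\mu)$.

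With your growth, $K=\lfloor c\log(1/\mu)\rfloor$ gives \eqref{eq:finalbound}, after adjusting $\mathscr{C}_2$, $\mathscr{C}_3$ and $\mu_0$ to absorb the $\log K$ term. Taking $K\sim\mu^{-1/p}$ would even give an exponentially small error. Your uniqueness argument is fine.
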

\begin{rem} To give an idea of the smallness of  (\ref{eq:finalbound}), let us suppose for simplicity $\mathcal{C}_{2,3}=1$, so that the r.h.s. of (\ref{eq:finalbound}) simplifies to $\exp(-(\log(1/\mu))^{2})$. The table below shows how dramatically the accuracy of the approximated solution increases as $\mu$ shrinks:
\[
\begin{array}{c|c}
\mu & \exp(-(\log(1/\mu))^{2}) \\[4pt]\hline
e^{-2}\;=\;1.4\times 10^{-1} & e^{-4}\approx 1.8\times 10^{-2} \\
e^{-4}\;=\;1.8\times 10^{-2} & e^{-16}\approx 1.1\times 10^{-7} \\
e^{-8}\;=\;3.4\times 10^{-4} & e^{-64}\approx 1.6\times 10^{-28} \\
e^{-16}\;=\;1.1\times 10^{-7} & e^{-256}\approx 3.8\times 10^{-112}
\end{array}
\]
As anticipated, although the constructed solution is not exact, it is practically indistinguishable from the exact one (provided it exists), as $\mu$ approaches zero. 
\end{rem}
\begin{rem}
The explicit values of $\mu_0$ and of the constants $\mathscr{C}_{1,2,3,4,5}>0$ are given in (\ref{eq:muzeronew}), (\ref{eq:mathscrcd}), (\ref{eq:mathscrc}), (\ref{eq:cfour}) and (\ref{eq:cfive}), respectively. As far as the previous remark is concerned, $\mathscr{C}_2 \ll 1$ and $\mathscr{C}_3 \gg 1$, but this is perturbatively irrelevant as both constants are independent of $\mu$ and hence $O(1)$.
\end{rem}
\begin{rem}\label{rem:sigma}
Another comment is in order about the choice of the analyticity radii $\rho,\sigma$. As the only analyticity assumption involves $g(\theta)$, which is non-radial, one might be led to believe that $\rho$ might me increased arbitrarily, with the hope to obtain better estimates. However, it is important to recall Liouville's Theorem, for which any entire bounded function must be constant. Hence, a bound on $\rho$ is immediate from the very expression of $u_0$, whose boundedness (we shall suppose $\norm{u_0}{(\rho,\sigma)}  \leq 1$, see also Rem \ref{rem:rho}) implies a bound on $\rho$. Although a refinement of the quantitative bounds might be possible, they would be still conceptually equivalent to the paradigmatic case $\rho,\sigma \leq 1$ considered here.   
\end{rem}
The rest of the paper is devoted to the proof of Theorem \ref{thm:main}. 
As it is very common in perturbative arguments, the proof will be divided into two parts. In the first one, also called ``formal'', a constructive algorithm is introduced to solve the problem up to some arbitrarily high power of $\mu$. However, despite the remainder is, say, $C \mu^{k}$ with arbitrarily large $k$, it is shown that there exists some critical $k^*$ such that the constant $C$ starts to grow so fast that increasing $k$ is no longer convenient. This is a typical phenomenon occurring, for instance, in Nekhoroshev-type arguments, see e.g. \cite{GalGio84} and goes back to the asymptotic series property already pointed out by Poincaré. The solution to this problem is to compute a specific $k$, we shall call $K_{\text{opt}}$ and set it as a final optimal normalisation order. This requires a bound on the remainder and it will be carried out in a subsequent ``quantitative'' section.       

\section{Formal algorithm}\label{sec:formal}
\subsection{Hierarchy of equations and formal lemma}
Let us consider the equation appearing in (\ref{eq:perturbedserrin}) and proceed formally by introducing the following (infinite) Taylor expansions
\begin{align}
u(r, \theta) &= \sum_{k \geq 0} \mu^k u_k(r, \theta), \label{eq:expandu} \\
w(r, \theta) &= \sum_{k \geq 1} \mu^{k-1} w_k(r, \theta). \label{eq:expandw}
\end{align}
By substituting in the mentioned equation, via a balance of the powers of $\mu$, one gets the following hierarchy of equations 
\begin{align}
\Delta u_1(r, \theta) - w_1(r, \theta)  u_0(r) & = 0 \label{eq:hierone}\\
\Delta u_2(r, \theta) - w_2(r, \theta)  u_0(r) & = u_1(r, \theta) w_1(r, \theta), \label{eq:hiertwo} \\
\ldots \nonumber
\end{align}
i.e., 
\beq{eq:hierarchy}
\Delta u_k(r, \theta) - w_k(r, \theta)  u_0(r) = \mathcal{F}^{(k)}(r,\theta)\mx{,}
\eeq
for all $k$, where
\beq{eq:source}
\mathcal{F}^{(k)}(r,\theta):=\sum_{j=1}^{k-1} w_j(r, \theta) u_{k-j}(r, \theta) \mx{,}
\eeq
is a known function, as it depends on  $u_0,\ldots, u_{k-1}$ and $w_1,\ldots, w_{k-1}$. Note that all of them are functions constructed during the previous stages of the hierarchy. \\
In this setting, we can state the following 
\begin{lem}[Formal]\label{lem:formal} 
For any $k \geq 1$, there exist a family of infinitely many $w_k(r, \theta)$ such that, for any of them, it is possible to determine a unique solution $u_k(r, \theta)$ of the equation (\ref{eq:hierarchy}) in such a way, by defining the truncated expansions 
$u^{[k]}(r, \theta):=\sum_{j = 0}^k \mu^j u_j(r, \theta)$ and $w^{[k]}(r, \theta):=\sum_{j = 1}^k \mu^{j-1} w_j(r, \theta)$, understood $u_j \equiv w_j \equiv 0$ for all $j \geq k+1$, the pair $(u^{[k]},w^{[k]})$ give rise to the following expressions of the defects
\begin{align}
\mathcal{R}^{[k]}:=\ml{R}(u^{[k]},w^{[k]})& = \sum_{n = k+1}^{2k} \mu^n \sum_{j = n - k}^{k} w_{n-j}(r, \theta) u_j(r, \theta) \label{eq:remainder},\\
\ml{E}_{D}^{[k]} :=\ml{E}_{D}(u^{[k]})&=\sum_{j=k+1}^{\infty}\mu^{j}\sum_{m=j-k}^{j}
\frac{g^{m}(\theta)}{m!}\partial_r^{m}u_{j-m}(1,\theta), \label{eq:dirichletdefect}\\
\ml{E}_{N}^{[k]}
:=
\ml{E}_{N}(u^{[k]})
&=
\sum_{j=k+1}^{\infty}\mu^j\Bigg\{
g'\sum_{h=0}^{k}\frac{g^{j-1-h}}{(j-1-h)!}
\partial_r^{j-1-h}\partial_\theta u_h(1,\theta) \nonumber \\
& -\sum_{h=0}^{k}\frac{g^{j-h}}{(j-h)!}
\partial_r^{j-h+1}u_h(1,\theta) -2g\sum_{h=0}^{k}\frac{g^{j-1-h}}{(j-1-h)!}
\partial_r^{j-h}u_h(1,\theta)\nonumber\\
&-g^2\sum_{h=0}^{\min\{j-2,k\}}\frac{g^{j-2-h}}{(j-2-h)!} 
\partial_r^{j-h-1}u_h(1,\theta) -C C_j(\theta)
\Bigg\},
\label{eq:neumanndefect} 
\end{align}
where $C_0(\theta) \equiv 1$ and 
\begin{align}
C_j(\theta)
& :=
\sum_{m=\lceil j/2\rceil}^{j}
\binom{\tfrac12}{m}\binom{m}{j-m}
\bigl(2g\bigr)^{2m-j}
\bigl(g^2+(g')^2\bigr)^{j-m} \nonumber \\
& +g\sum_{m=\lceil (j-1)/2\rceil}^{j-1}
\binom{\tfrac12}{m}\binom{m}{j-1-m}
\bigl(2g\bigr)^{2m-(j-1)}
\bigl(g^2+(g')^2\bigr)^{j-1-m} \label{eq:c},
\end{align}
for all $j \geq 1$ and $\binom{1/2}{m}:=( 2 m!)^{-1} (1/2-1)(1/2-2)\cdots(1/2-(m-1))
$ with $\binom{1/2}{0}=1$.\\
In particular  $\mathcal{R}^{[k]}, \ml{E}_{D}^{[k]}, \ml{E}_{N}^{[k]}=O \left(\mu^{k+1}\right)$ i.e. the pair $(u^{[k]},w^{[k]})$ satisfies (\ref{eq:perturbedserrin}) up to $O(\mu^k)$.
\end{lem}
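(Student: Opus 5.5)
The plan is to treat the hierarchy (\ref{eq:hierarchy}) order by order on the unperturbed disk $\{r\le 1\}$, extended analytically in $r$ to $\mathcal{I}_\mu$. Boundary conditions on $\partial\Omega_\mu$ are transferred to $r=1$ by Taylor expansion in $r$ around $1$ at $r=1+\mu g(\theta)$. The unknown $w_k$ is used as the free parameter that enforces the Neumann condition, while the Dirichlet condition fixes $u_k$ uniquely.

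\emph{Step 1 (boundary data at order $k$).} Substituting $u=\sum_j\mu^j u_j$ into $\mathcal{E}_D$ and expanding $u_j(1+\mu g,\theta)=\sum_m \frac{(\mu g)^m}{m!}\partial_r^m u_j(1,\theta)$, the coefficient of $\mu^k$ vanishes if and only if
\[
u_k(1,\theta)=\varphi_k(\theta):=-\sum_{m=1}^{k}\frac{g^m}{m!}\partial_r^m u_{k-m}(1,\theta),
\]
which involves only previously constructed functions. For $\mathcal{E}_N$, I write $\nu$ via the polar-coordinate normal of the curve $r=1+\mu g$, i.e.\ proportional to $\bigl(-(1+\mu g),\, \mu g'/(1+\mu g)\bigr)$ in the $(\partial_r,\frac1r\partial_\theta)$ frame. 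The normalisation factor $(1+\mu g)\sqrt{\cdots}$ in (\ref{eq:nbd}) is designed to clear the denominators, so that $\mathcal{E}_N$ becomes a polynomial expression in $\mu$, $g$, $g'$, $u_r$, $u_\theta$, apart from the $C\sqrt{\cdot}$ term. That term is expanded by the binomial series $\sqrt{1+x}=\sum_m\binom{1/2}{m}x^m$ with $x=2\mu g+\mu^2(g^2+g'^2)$, followed by a multinomial regrouping in powers of $\mu$; this yields exactly the coefficients $C_j$ in (\ref{eq:c}). The order-$\mu^k$ coefficient then reads
\[
-\partial_r u_k(1,\theta)=\psi_k(\theta),
\]
where $\psi_k$ is known from the previous stages.

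\emph{Step 2 (solving order $k$ with the freedom in $w_k$).} For a given $w_k$, the Dirichlet problem $\Delta u_k=w_ku_0+\mathcal{F}^{(k)}$ on the unit disk with $u_k(1,\cdot)=\varphi_k$ has a unique solution, which gives the uniqueness claim. It can be written mode by mode in Fourier: each mode solves a radial ODE, and analyticity is kept by variation of constants. The Neumann requirement is then a linear constraint on $w_k$. To see this, test against the harmonic functions $r^n\cos n\theta$ and $r^n\sin n\theta$ and use Green's identity: for each Fourier mode,
\[
\int_0^1 r^{n+1}\bigl(u_0(r)w_{k,n}(r)+\mathcal{F}^{(k)}_n(r)\bigr)\,dr=-\psi_{k,n}-n\varphi_{k,n}.
\]
Since $\int_0^1 r^{n+1}u_0\,dr>0$, one admissible choice is $w_{k,n}(r)=c_n$ constant, with $c_n$ determined by this identity. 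Adding to it any $\tilde w$ whose modes satisfy $\int_0^1 r^{n+1}u_0\tilde w_n\,dr=0$ produces infinitely many admissible $w_k$. For each of them, the resulting $u_k$ automatically satisfies $\partial_r u_k(1,\cdot)=-\psi_k$.

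\emph{Step 3 (defects).} With $u_j,w_j$ constructed for $j\le k$, compute $\mathcal{R}(u^{[k]},w^{[k]})$ algebraically. The orders $\mu^0,\dots,\mu^k$ cancel by (\ref{eq:radial}) and (\ref{eq:hierarchy}), and the surviving cross terms $\mu^{n}w_{n-j}u_j$ with $n\ge k+1$ and $j,n-j\le k$ give (\ref{eq:remainder}). For $\mathcal{E}_D^{[k]}$ and $\mathcal{E}_N^{[k]}$, reuse the expansions of Step 1 with $u_j\equiv0$ for $j>k$. The orders up to $k$ vanish by construction, and the leftover coefficients of $\mu^j$, $j\ge k+1$, are exactly the truncated sums (\ref{eq:dirichletdefect}) and (\ref{eq:neumanndefect}), with the index restrictions $h\le k$ and $m\ge j-k$. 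This gives $O(\mu^{k+1})$.

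\emph{Main obstacle.} I expect the bookkeeping in the Neumann defect to be the hardest step. It requires the exact polar expression of $\partial_\nu$, the product of three expanded series (the Taylor series of $u_r$, $u_\theta$, the factors $g$, $g^2$ coming from $(1+\mu g)^2$, and the binomial series), and a correct regrouping into the stated index ranges; in particular the upper limit $\min\{j-2,k\}$ must come out right. I would verify the formula for the orders $j=1,2$ by hand before doing the general regrouping.
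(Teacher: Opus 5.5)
Your proposal is correct and follows essentially the paper's route. You transfer the Dirichlet and Neumann conditions to $r=1$ by Taylor expansion, decompose into Fourier/Cauchy--Euler problems, use $w_k$ to supply the missing free parameter in each mode, and obtain the defects by truncating the same series. The differences are cosmetic: your Green's identity against $r^n\cos n\theta$, $r^n\sin n\theta$ gives exactly the compatibility conditions (\ref{eq:zerothone}), (\ref{eq:foranm}), (\ref{eq:forbnm}), which the paper gets by evaluating its variation-of-constants formulas at $r=1$ and eliminating $C_n^{(k)}$, and your family ``constant plus mode-wise orthogonal perturbation'' contains the paper's polynomial family (\ref{eq:anseries}).
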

\begin{ex} We have $C_1(\theta)=2 g$ and $C_2(\theta)=g^2(\theta)+(1/2)(g')^2$.    
\end{ex}
\proof The proof will consist in deriving, at each stage of the hierarchy, corresponding Dirichlet and Neumann conditions and imposing them in the equation written in the Fourier space. A pseudocode version of this proof in form of algorithm shall be given in the Appendix. Let us firstly expand
\begin{align}
u_k(r, \theta) &= A_0^{(k)}(r)+\sum_{n=1}^{\infty} \left[ A_n^{(k)}(r) \cos(n\theta) + B_n^{(k)}(r) \sin(n\theta) \right], \label{eq:ukfourier} \\
w_k(r,\theta) &= a_0^{(k)}(r)+\sum_{n=1}^{\infty} \left[ a_n^{(k)}(r) \cos(n\theta) + b_n^{(k)}(r) \sin(n\theta) \right], \label{eq:wkfourier} \\
\mathcal{F}^{(k)}(r,\theta) &=  \xi_0^{(k)}(r)+ \sum_{n=1}^{\infty} \left[ \xi_n^{(k)}(r) \cos(n\theta) + \eta_n^{(k)}(r) \sin(n\theta) \right]. \label{eq:fkfourier}
\end{align}
By recalling $\Delta u_k = \partial_r^2 u_k + r^{-1} \partial_r u_k + r^{-2} \partial_\theta^2 u_k$ and substituting the expansions above in (\ref{eq:hierarchy}) we obtain
\begin{align}
A_0^{(k)\prime\prime}(r) + r^{-1} A_0^{(k)\prime}(r) & = 
u_0(r)  a_0^{(k)}(r) + \xi_0^{(k)}(r)=: \mathfrak{f}_0^{(k)} (r), \label{eq:ceazero}\\
A_n^{(k)\prime\prime}(r) + r^{-1} A_n^{(k)\prime}(r) - r^{-2} n^2 A_n^{(k)}(r) &= u_0(r)  a_n^{(k)}(r) + \xi_n^{(k)}(r)=:\mathfrak{f}_n^{(k)} (r), \label{eq:cean} \\
B_n^{(k)\prime\prime}(r) + r^{-1} B_n^{(k)\prime}(r) - r^{-2} n^2 B_n^{(k)}(r) &= u_0(r)  b_n^{(k)}(r) + \eta_n^{(k)}(r) =:\mathfrak{g}_n^{(k)} (r), \label{eq:cebn}
\end{align}
which are clearly in the Cauchy-Euler form. Solutions which stay bounded in the limit $r \rw 0$ are realised via a suitable choice of the free constants. As it easy to check, those solutions have the following expressions
\begin{align}
A_0^{(k)}(r) & = C_0^{(k)} + \int_0^r \frac{1}{s} \int_0^s s' \mathfrak{f}_0^{(k)}(s') ds' ds \label{eq:azero}\\
A_n^{(k)}(r) &= r^n \left( C_n^{(k)} - \frac{1}{2n} \int_r^1 \mathfrak{f}_n^{(k)}(s) s^{1 - n} ds \right)
- \frac{r^{-n}}{2n} \int_0^r \mathfrak{f}_n^{(k)}(s) s^{1 + n} ds, \label{eq:an} \\
B_n^{(k)}(r) &= r^n \left( D_n^{(k)} - \frac{1}{2n} \int_r^1 \mathfrak{g}_n^{(k)}(s) s^{1 - n} ds \right)
- \frac{r^{-n}}{2n} \int_0^r \mathfrak{g}_n^{(k)}(s) s^{1 + n} ds, \label{eq:bn}
\end{align} 
where $C_n^{(k)}$, $D_n^{(k)}$ are free (sequences of) constants to be determined.  
\begin{rem}\label{rem:lack}
It is clear from these expressions that the boundedness constraint as $r \rw 0$ ``removes'' one parameter from the solutions of the Cauchy-Euler equations. On the other hand, as it is natural to expect, either the Dirichlet or the Neumann boundary conditions would require one free parameter each to be satisfied. Basically, this is the reason why the problem does not possess solution if $w$ is fixed a priori and this freedom is represented by the possibility to choose $a_n^{(k)}$ and $b_n^{(k)}$. As we shall see, those will allow to compensate the ``lack'' of available constants in the Cauchy-Euler solutions. 
\end{rem}
\subsection{Dirichlet boundary condition}
By definition of $\Omega_{\mu}$, the boundary is parameterised by $r = 1 + \mu g $. Firstly let us recall the expansion for $u$, 
\[
u(1 + \mu g, \theta) = \sum_{j=0}^\infty \mu^j u_j(1 + \mu g, \theta) \mx{,}
\]
On the other hand, each $u_j$ can be expanded in Taylor series
\[
u_j(1 + \mu g, \theta) = u_j(1, \theta) + \mu g \partial_r u_j(1, \theta) + \frac{\mu^2 g^2}{2} \partial_r^2 u_j(1, \theta) + \cdots
\]
Hence, the condition is equivalent to
\begin{align}
0=\mathcal{E}_D(u)=u(1 + \mu g, \theta) & = \sum_{m=0}^\infty \mu^m \left[ u_m(1, \theta) + \sum_{h=1}^{\infty} \frac{\mu^h g^h}{h!} \partial_r^h u_m(1, \theta) \right] \nonumber \\
&=\sum_{m=0}^\infty \mu^m \sum_{h=0}^{\infty} \frac{\mu^h g^h}{h!} \partial_r^{h} u_m(1, \theta) \nonumber \\
&=\sum_{m=0}^\infty\sum_{h=0}^{\infty} \mu^{m+h} \frac{g^h}{h!} \partial_r^{h} u_m(1, \theta)
\nonumber \\
&=\sum_{j=0}^{\infty}\mu^j
\sum_{h=0}^{j}\frac{g^h}{h!} \partial_r^{h}u_{j-h}(1,\theta) \mx{.}
\label{eq:lastexpdir}
\end{align}
As a consequence, by isolating the term $h=0$ in the last sum, the Dirichlet boundary condition reads as a condition on $u_j(1,\theta)$, i.e.
\beq{eq:dirichlet}
u_j(1, \theta) = - \sum_{h=1}^{j}\frac{g^h}{h!} \partial_r^{h}u_{j-h}(1,\theta) =: \Phi^{(j)}(\theta) \mx{,}
\eeq
whic is a function of previously determined objects of the hierarchy. Hence, at each stage $j$, $u_j(1, \theta)$ is a known function of $\theta$, hence its Fourier coefficients can be completely determined 
\beq{eq:expansionfourierdirichlet}
u_j(1, \theta)=:\widetilde{A}_0^{(j)}+\sum_{n=1}^{\infty} \left[ \widetilde{A}_n^{(j)} \cos(n\theta) + \widetilde{B}_n^{(j)} \sin(n\theta) \right] \mx{.}
\eeq 
\subsection{Neumann boundary conditions}
It is easy to check that for our domain $\Omega_{\mu}$ we have
\[
\pl_{\nu} u|_{\pl \Omega_{\mu}}
=
\frac{-(1+\mu g)u_r(1+\mu g, \theta)
+\mu (1+\mu g)^{-1} g'u_\theta(1+\mu g)}
{\sqrt{(1+\mu g)^2+\mu^2(g')^2}}\mx{.}
\]
Hence, the Neumann condition reads as 
\beq{eq:neumannconditionfirst}
\begin{aligned}
0=\ml{E}_N:=&
\mu g' \partial_\theta u(1 + \mu g, \theta)
- (1+\mu g)^2 \partial_r u(1 + \mu g, \theta)\\
&- C (1+\mu g) \sqrt{1+2\mu g+\mu^{2}\bigl(g^{2}+(g')^{2}\bigr)} \mx{.}
\end{aligned}
\eeq
We proceed by expanding each term near $r = 1$ and using the power expansions for $u$. This yields 
\begin{align}
\partial_\theta u(1 + \mu g, \theta)
& = \sum_{j=0}^\infty \mu^j \sum_{h=0}^{j} \frac{g^{j - h}}{(j - h)!} \cdot \partial_r^{j - h} \partial_\theta u_h(1, \theta) \label{eq:neuone}\mx{,}\\
(1+\mu g)^2\partial_r u(1+\mu g,\theta)
&=\sum_{j=0}^{\infty}\mu^j\Bigg[
\sum_{h=0}^{j}\frac{g^{j-h}}{(j-h)!}\partial_r^{j-h+1}u_h(1,\theta)
\nonumber\\
&\qquad\qquad
+2g\sum_{h=0}^{j-1}\frac{g^{j-1-h}}{(j-1-h)!}\partial_r^{j-h}u_h(1,\theta)
\nonumber\\
&\qquad\qquad
+g^2\sum_{h=0}^{j-2}\frac{g^{j-2-h}}{(j-2-h)!} \partial_r^{j-h-1}u_h(1,\theta)
\Bigg], \label{eq:neutwo}\\
(1+\mu g)\sqrt{1+2\mu g+\mu^{2}\bigl(g^{2}+(g')^{2}\bigr)}
&=\sum_{j=0}^{\infty}\mu^{j} C_j(\theta)\label{eq:neuthree} \mx{,}
\end{align} 
where $C_j(\theta)$ has been defined in (\ref{eq:c}). We recall $u_0(r,\theta) \equiv u_0(\theta)$ so that $\pl_{\theta} u_0(r,\theta) \equiv 0$. 
\begin{ex}\label{rem:firstorders}%
Let us restore $C=1/2$, then the first orders of (\ref{eq:neumannconditionfirst}) read as
\begin{align*}
j=0: \, \,  \partial_r u_0(1,\theta) &=-1/2,\\
j=1: \, \, \partial_r u_1(1,\theta)
&= g'\partial_\theta u_0(1,\theta)
- g\partial_r^{2}u_0(1,\theta)
-2g\partial_r u_0(1,\theta)
-2C g\\
&=(1/2)g,\\
j=2: \, \, \partial_r u_2(1,\theta)
&=g'\Bigl[g\partial_r\partial_\theta u_0(1,\theta)
+\partial_\theta u_1(1,\theta)\Bigr]
-(1/2)g^2\partial_r^{3}u_0(1,\theta)\\
&\qquad-g\partial_r^{2}u_1(1,\theta)
-2g^2\partial_r^{2}u_0(1,\theta)
-2g\partial_r u_1(1,\theta)\\
&\qquad-g^2\partial_r u_0(1,\theta)
-C\Bigl(g^2+(1/2)(g')^2\Bigr)\\
&=g'\partial_\theta u_1(1,\theta)
-g\partial_r^{2}u_1(1,\theta)
-2g\partial_r u_1(1,\theta)
+g^2-(1/4)(g')^2,
\end{align*}
where the expression above have been simplified by using (\ref{eq:radial}). 
\end{ex}
Similarly to the effect of Dirichlet condition on $u_j(1,\theta)$ found in (\ref{eq:dirichlet}), the example above shows how the Neumann condition can be interpreted as a constraint on $\pl_r u_j(1,\theta)$ for $j=0,1,2$. This is actually a general property. To prove this, let us now substitute the expansions \eqref{eq:neuone}, \eqref{eq:neutwo} and \eqref{eq:neuthree} into \eqref{eq:neumannconditionfirst}, then collect the powers of $\mu$
\beq{eq:expansionneumann}
\begin{aligned}
0&=\sum_{j=0}^{\infty}\mu^j\left\{
g'\sum_{h=0}^{j-1}\frac{g^{j-1-h}}{(j-1-h)!}
\partial_r^{j-1-h}\partial_\theta u_h(1,\theta)-\sum_{h=0}^{j}\frac{g^{j-h}}{(j-h)!}
\partial_r^{j-h+1}u_h(1,\theta) \right. \\
&\left. -2g\sum_{h=0}^{j-1}\frac{g^{j-1-h}}{(j-1-h)!}
\partial_r^{j-h}u_h(1,\theta) 
-g^2\sum_{h=0}^{j-2}\frac{g^{j-2-h}}{(j-2-h)!}
\partial_r^{j-h-1}u_h(1,\theta) -C C_j(\theta)
\right\} \mx{,}
\end{aligned}
\eeq
in which it is understood $\sum_{j=p}^{q} \cdot =0$ for all $q < p$. \\
Let us now isolate the term $h=j$ in the second sum, i.e. $-\partial_r u_j(1,\theta)$. Hence, the quantity between curly brackets can be written as 
\[
0=\sum_{j=0}^{\infty}\mu^j\left\{-\partial_r u_j(1,\theta) + \Psi^{(j)} \right\} \mx{,}
\]
which holds provided that the (infinite) family of equations 
\beq{eq:neumanneq}
\partial_r u_j(1, \theta)=\Psi^{(j)}(\theta) \mx{,}
\eeq
are satisfied, where
\begin{align}
\Psi^{(j)}(\theta)
:={}& g'\sum_{h=0}^{j-1}\frac{g^{j-1-h}}{(j-1-h)!}
\partial_r^{j-1-h}\partial_\theta u_h(1,\theta)\nonumber\\
&-\sum_{h=0}^{j-1}\frac{g^{j-h}}{(j-h)!}
\partial_r^{j-h+1}u_h(1,\theta)
-2\sum_{h=0}^{j-1}\frac{g^{j-h}}{(j-1-h)!}
\partial_r^{j-h}u_h(1,\theta)\nonumber\\
&-\sum_{h=0}^{j-2}\frac{g^{j-h}}{(j-2-h)!}
\partial_r^{j-h-1}u_h(1,\theta) -C C_j(\theta). \label{eq:psi}
\end{align}
As a consequence, the Neumann condition gives rise to a corresponding sequence of Fourier coefficients representing it
\beq{eq:expansionfourierdirichlet}
\pl_r u_j(1, \theta)=:\widetilde{A}_0^{(j)'}+\sum_{n=1}^{\infty} \left[ \widetilde{A}_n^{(j)'} \cos(n\theta) + \widetilde{B}_n^{(j)'} \sin(n\theta) \right] \mx{.}
\eeq 
\subsection{Construction of the (infinitely many) pair(s) $\left(u^{[k]},w^{[k]}\right)$.}
Let us recall that the constants
\beq{eq:parameters}
\widetilde{A}_0^{(j)}, \quad \widetilde{A}_0^{(j)\prime}, \quad \widetilde{A}_n^{(j)}, \quad \widetilde{A}_n^{(j)\prime}, 
\quad \widetilde{B}_n^{(j)}, \quad \widetilde{B}_n^{(j)\prime} \mx{,}
\eeq
have been determined in the latest sections right from the boundary conditions.\\
In this section, we are going to show how to exploit the variability of $a_n^{(j)}$ and $b_n^{(j)}$, as anticipated in Rem. \ref{rem:lack} to determine an infinite family of formal solutions. The approach will consists in showing that, given a paradigmatic class of $w_j$, for all $k \leq K$, it is possible to isolate a subset of functions in this class and a corresponding solution $u_k$, such that the $k-$th equation of the hierarchy is satisfied.\\  
For this purpose we shall consider the following expansions: set an arbitrary $M \in \mathbb{N}$ for all $k$ and define 
\beq{eq:anseries}
a_n^{(k)}(r)=\sum_{m=0}^{M} a_{n,m}^{(k)} r^m, \qquad b_n^{(k)}(r)=\sum_{m=0}^{M} b_{n,m}^{(k)} r^m.
\eeq
\begin{rem} 
We stress that $M$ is chosen to be independent on $k$ for simplicity of discussion only and without any loss of generality. It is worth observing that a fully acceptable choice could be $M=0$, implying in this way that $a_n^{(k)}(r) \equiv a_{n,0}^{(k)}$ and $b_n^{(k)}(r)  \equiv b_{n,0}^{(k)}$, i.e. constant. It is easy to realise that, the particular choice $M=0$ would imply $w_k=w_k(\theta)$, i.e. to be radially independent.   
\end{rem}
\noindent Let us now proceed by imposing the boundary conditions.
\subsubsection*{Zeroth mode (average):}
Recall (\ref{eq:azero}). By computing the first derivative and evaluating both at $r=1$, we impose (denoted with $\equiv$) that
\begin{align}
\widetilde{A}_0^{(k)} & \equiv {A}_0^{(k)}(1) = C_0^{(k)} + \int_0^1 \frac{1}{s} \int_0^s s' \mathfrak{f}_0^{(k)}(s') ds' ds, \label{eq:zerothzero}\\
\widetilde{A}_0^{(k)'} & \equiv {A}_0^{(k)'}(1) = \int_0^1 s \mathfrak{f}_0^{(k)}(s) ds. \label{eq:zerothone}
\end{align}
From (\ref{eq:zerothzero}), we immediately obtain the value of $C_0^{(k)}$. By substituting back into (\ref{eq:azero}), we get 
\beq{eq:azeropre}
A_0^{(k)}(r) = \widetilde{A}_0^{(k)} - \int_r^1 \frac{1}{s} \left( \int_0^s s' \mathfrak{f}_0^{(k)}(s')  ds' \right) ds \mx{.}
\eeq
This yields the final expression of $A_0^{(k)}(r)$, subject to the determination of $\mathfrak{f}_0^{(k)}(r)$, which is sill unknown because of $a_0^{(k)}(r)$. It is here that (\ref{eq:zerothone}) comes into play. 
First of all, by (\ref{eq:anseries}), we find that
\[
\int_0^1 s \mathfrak{f}_0^{(k)}(s) ds =  \frac{a_{0,0}^{(k)}(r)}{16} + 
\frac{1}{2} \sum_{m=1}^{M} \frac{a_{0,m}^{(k)}}{(2+m)(4+m)} + 
\int_0^1 s \xi_0^{(k)}(s) ds
\mx{.}
\]
On the other hand, the l.h.s. is equal to $\widetilde{A}_0^{(k)'}$ by (\ref{eq:zerothone}), hence, by solving w.r.t. $a_{0,0}^{(k)}(r)$ we obtain
\beq{eq:azz}
a_{0,0}^{(k)}(r)=16 \left[\widetilde{A}_0^{(k)\prime} - \int_0^1 s \xi_0^{(k)}(s) ds - \frac{1}{2} \sum_{m=1}^{M} \frac{a_{0,m}^{(k)}}{(2+m)(4+m)}  \right] \mx{.}
\eeq
Once the arbitrary sequence $\{a_{0,m}\}_{m=1,\ldots,M}$ has been set, $a_0^{(k)}(r)$ is found by (\ref{eq:anseries}) so that (\ref{eq:ceazero}) fully determines $\mathfrak{f}_0^{(k)}(r)$ and finally (\ref{eq:azeropre}) yields $A_0^{(k)}(r)$. 
\subsubsection*{Non-zero modes:}
By recalling (\ref{eq:an}) and (\ref{eq:bn}) and evaluating them at $r=1$ alongside their first derivatives, we require
\begin{align*}
\widetilde{A}_n^{(k)} & \equiv \lim_{r \to 1^-} {A}_n^{(k)}(r) = C_n^{(k)} - \frac{1}{2n} \int_0^1 \mathfrak{f}_n^{(k)}(s) s^{1+n} ds, \\
\widetilde{A}_n^{(k)'} & \equiv \lim_{r \to 1^-} {A}_n^{(k)'}(r)= n C_n^{(k)} + \frac{1}{2} \int_0^1 \mathfrak{f}_n^{(k)}(s) s^{1+n} ds, \\
\widetilde{B}_n^{(k)} & \equiv \lim_{r \to 1^-} {B}_n^{(k)}(r) = D_n^{(k)} - \frac{1}{2n} \int_0^1 \mathfrak{g}_n^{(k)}(s) s^{1+n} ds, \\
\widetilde{B}_n^{(k)'} & \equiv \lim_{r \to 1^-} {B}_n^{(k)'}(r) = n D_n^{(k)} + \frac{1}{2} \int_0^1 \mathfrak{g}_n^{(k)}(s) s^{1+n} ds.
\end{align*}
It is sufficient to multiply the first and the third ones by $n$ and sum them pairwise to get:
\beq{eq:cndn}
C_n^{(k)} = 2^{-1} \left( \widetilde{A}_n^{(k)} + n^{-1} \widetilde{A}_n^{(k)\prime} \right) \quad; \quad 
D_n^{(k)} = 2^{-1} \left( \widetilde{B}_n^{(k)} + n^{-1} \widetilde{B}_n^{(k)\prime} \right) \mx{.}
\eeq
It is now possible to substitute these into (\ref{eq:an}) and (\ref{eq:bn}), obtaining in this way:
\begin{align}
\int_0^1 \frac{1 - s^2}{4} a_n^{(k)}(s) s^{1+n} ds &= \widetilde{A}_n^{(k)\prime} - n \widetilde{A}_n^{(k)} - \int_0^1 \xi_n^{(k)}(s) s^{1+n} ds, \label{eq:foranm}\\
\int_0^1 \frac{1 - s^2}{4} b_n^{(k)}(s) s^{1+n} ds &= \widetilde{B}_n^{(k)\prime} - n \widetilde{B}_n^{(k)} - \int_0^1 \eta_n^{(k)}(s) s^{1+n} ds \mx{.} \label{eq:forbnm}
\end{align}
As for their l.h.s., one has, 
\begin{align*}
\int_0^1 \frac{1 - s^2}{4} a_n^{(k)}(s) s^{1+n} ds &= \frac{1}{4} \sum_{m=0}^M a_{n,m}^{(k)} \int_0^1 (1 - s^2) s^{1+n+m} ds \\
&= \frac{a_{n,0}^{(k)}}{\chi_n}+ \frac{1}{2} \sum_{m=1}^{M} \frac{a_{n,m}^{(k)}}{(2+n+m)(4+n+m)}  \mx{,}
\end{align*}
and similarly for $b_n^{(k)}$, here $\chi_n:=2 (2+n)(4+n)$. In conclusion, once the $2M$ free parameters $\{a_{\cdot,m}\}_{m=1,\ldots,M}$ and $\{b_{\cdot,m}\}_{m=1,\ldots,M}$ have been chosen, we obtain from (\ref{eq:foranm}) and (\ref{eq:forbnm}),
\begin{align}
a_{n,0}^{(k)} &= \chi_n \left[
\widetilde{A}_n^{(k)'} - n \widetilde{A}_n^{(k)}
- \int_0^1 \xi_n^{(k)}(s) s^{1+n} ds -\frac{1}{2} \sum_{m=1}^{M} \frac{a_{n,m}^{(k)}}{(2+n+m)(4+n+m)} 
\right] \mx{,} \label{eq:anm}\\
b_{n,0}^{(k)} &= \chi_n \left[
\widetilde{B}_n^{(k)'} - n \widetilde{B}_n^{(k)}
- \int_0^1 \eta_n^{(k)}(s) s^{1+n} ds -\frac{1}{2} \sum_{m=1}^{M} \frac{b_{n,m}^{(k)}}{(2+n+m)(4+n+m)} 
\right] \mx{.} \label{eq:bnm}
\end{align}
These are meant to complete the definitions of $a_n^{(k)}(r)$ and $b_n^{(k)}(r)$ via (\ref{eq:anseries}) and then those of $f_{n}^{(A)}(r)$ and $f_{n}^{(B)}(r)$ according to (\ref{eq:cean}) and (\ref{eq:cebn}), respectively, so that the determination of $A_n^{(k)}(r)$ and $B_n^{(k)}(r)$ is complete by (\ref{eq:an}) and (\ref{eq:bn}).
\endproof
\subsection{Remainders}\label{subsec:remainders}
\subsubsection{Resolvability defect $\mathcal{R}^{[k]}$}
Our aim is now to prove (\ref{eq:remainder}). To this end, let us substitute the expansions (\ref{eq:expandu}) and (\ref{eq:expandw}) into $\mathcal{R}$ defined in (\ref{eq:firstrem}), obtaining
\begin{align*}
\mathcal{R}=&-\Delta\Bigl(\sum_{j=0}^{\infty}\mu^j u_j\Bigr)+\mu\Bigl(\sum_{j=0}^{\infty}\mu^j u_j\Bigr)\Bigl(\sum_{m=1}^{\infty}\mu^{m-1}w_m\Bigr)-1\\
&=\sum_{j=0}^{\infty}\mu^j(-\Delta u_j)+\sum_{j=0}^{\infty}\sum_{m=1}^{\infty}\mu^{j+m}u_j w_m-1\\
&=\sum_{j=0}^{\infty}\mu^j(-\Delta u_j)+\sum_{j=1}^{\infty}\mu^{j}\sum_{m=1}^{j}u_{j-m}w_m-1\\
&=-\Delta u_0-1+\sum_{j=1}^{\infty} \left[ -\Delta u_j+\sum_{m=1}^{j}u_{j-m}w_m \right] \mx{,}
\end{align*}
where clearly $-\Delta u_0 - 1=0$ by (\ref{eq:radial}). It is immediate to see that the quantity between square parentheses of the latter is nothing but the equation of the hierarchy (\ref{eq:hierarchy}). Hence, imagining for a moment to be able to construct the whole sequence $u_j, w_j$ for all $j$, then $\mathcal{R}$ would vanish identically, i.e., as it is natural to expect, the full solution would have \emph{zero resolvability defect}.\\
To evaluate the defect of a truncated expansion, it is sufficient to split the sum above as follows
\[
\mathcal{R}=\sum_{j=1}^{k} \mu^{j} \left[ -\Delta u_j+\sum_{m=1}^{j}u_{j-m}w_m \right]+ \sum_{j=k+1}^{\infty} \mu^{j} \left[ -\Delta u_j+\sum_{m=1}^{j}u_{j-m}w_m \right] \mx{.}
\]
When evaluated on $u^{[k]}, w^{[k]}$, only the first sum of the latter vanishes, while the second sum constitutes the defect we wish to evaluate 
\[
\mathcal{R}^{[k]}= \left[
\sum_{j=k+1}^{\infty}  \mu^{j} \left( -\Delta u_j+\sum_{m=1}^{j}u_{j-m}w_m \right) 
\right]_{\substack{
u_{k+1}=u_{k+2}=\ldots=0\\
w_{k+1}=w_{k+2}=\ldots=0
}}
\mx{.}
\]
The suggested evaluation yields the desired expression. 
\begin{ex} The first expressions of $\mathcal{R}^{[k]}$ read as
\begin{align*}
\mathcal{R}^{[1]} & = \mu^2 w_1 u_1,\\
\mathcal{R}^{[2]} & = \mu^3 \left(w_1 u_2 + w_2 u_1 \right) + \mu^4 w_2 u_2,\\
\mathcal{R}^{[3]} &= \mu^4 \left(w_1 u_3 + w_2 u_2 + w_3 u_1\right)
+ \mu^5 \left(w_2 u_3 + w_3 u_2\right)
+ \mu^6 w_3 u_3.
\end{align*}
\end{ex}
\subsubsection{Dirichlet boundary defect $\ml{E}_{D}^{[k]}$}
Proceeding similarly to the resolvability defect, let us write (\ref{eq:lastexpdir}) by splitting the sum
\beq{eq:temporaryd}
\ml{E}_D \equiv u^{[k]}(1+\mu g,\theta)
=\sum_{j=0}^k \mu^j \sum_{h=0}^j \frac{g^h}{h!} \pl_r u_{j-h}(1,\theta)+
\sum_{j=k+1}^{\infty} \mu^j \sum_{h=0}^j \frac{g^h}{h!} \pl_r u_{j-h}(1,\theta) \mx{.}
\eeq
Once more, if we could be able to impose the infinite number of conditions (\ref{eq:dirichlet}), (\ref{eq:temporaryd}) would be identically zero. However, when using a truncated expression $u^{[k]}(1+\mu g, \theta)$, instead of the ``full'' $u(1+\mu g,\theta)$, the first $j \leq k$ conditions (\ref{eq:dirichlet}) are still imposed, so the first sum of (\ref{eq:temporaryd}) is still zero, however the remaining (\ref{eq:dirichlet}) for $j \geq k+1$ are lost and the second sum of (\ref{eq:temporaryd}) does not vanish anymore: this gives rise to the defect at hand.\\
More precisely, recalling its definition,  
\[
\ml{E}_{D}^{[k]} \equiv u^{[k]}(1+\mu g,\theta)
=\sum_{j=0}^{k}\mu^{j} \left[\Phi^{(j)}(\theta)- u_j(1,\theta)\right]
+\sum_{j=k+1}^{\infty}\mu^{j}\sum_{h=j-k}^{j}\frac{g^{h}(\theta)}{h!}\partial_r^{h}u_{j-h}(1,\theta),
\]
where the first sum is analogous to the first one in (\ref{eq:temporaryd}) and the second sum has been obtained from the second one of (\ref{eq:temporaryd}) by setting $u_j \equiv 0$ for all $j \geq k+1 $. As already pointed out, (\ref{eq:dirichlet}) implies that the first sum is identically zero, yielding in this way (\ref{eq:dirichletdefect}).

\subsubsection{Neumann boundary defect $\ml{E}_{N}^{[k]}$}
We are now dealing with the proof of (\ref{eq:neumanndefect}). Despite slightly cumbersome, the procedure is analogous the one we have used to obtain  (\ref{eq:dirichletdefect}). For this purpose let us now focus on \eqref{eq:expansionneumann} and let us split the sum in two, once more in the form $\sum_{j=0}^{k}\mu^{j} \{\cdot\}+\sum_{j=k+1}^{\infty}\mu^{j} \{\cdot\}$, where $\{\cdot\}$ denotes the curly brackets in \eqref{eq:expansionneumann}. As we are using $u^{[k]}$, only the conditions for $0 \leq j \leq k$ are satisfied, so that we will only have $\sum_{j=0}^{k}\mu^{j} \{\cdot\}=0$. As a consequence, $\ml{E}_{N}^{[k]} \equiv \sum_{j=k+1}^{\infty}\mu^{j} \{\cdot\}$. Also in this case, it is sufficient to use the property $u_j \equiv 0$ for all $j \geq k+1$ to get (\ref{eq:neumanndefect}).

\section{Quantitative estimates}\label{sec:quantitative}
This section is concerned with estimate (\ref{eq:finalbound}). In order to prove the latter, we need to find a bound on the objects involved in the scheme and their variation in size as the normalisation order grows. \\
The basic tool of our analysis is Cauchy's dimensional bound, which will be recalled below and broken down into three sub-cases for convenience.  
\begin{prop}[Cauchy bounds]
Suppose $u \in \mathfrak{H}_{\rho',\sigma';\mu}$, then the following bounds hold
\begin{align}
\left\| \partial_r^p \partial_\theta^q u \right\|_{(\rho'', \sigma'')} 
& \leq 
\frac{p!}{(\rho' - \rho'')^p} 
\frac{q!}{(\sigma' - \sigma'')^q} 
\left\| u \right\|_{(\rho', \sigma')} 
\quad \text{if } \rho'' < \rho', \; \sigma'' < \sigma', \label{eq:cauchyone}\\ 
\left\| \partial_r^p u \right\|_{(\rho'', \sigma')} 
& \leq 
\frac{p!}{(\rho' - \rho'')^p} 
\left\| u \right\|_{(\rho', \sigma')} 
\quad \text{if } \rho'' < \rho', \; \sigma'' = \sigma', \label{eq:cauchytwo} \\
\left\| \partial_\theta^q u \right\|_{(\rho', \sigma'')} 
& \leq 
\frac{q!}{(\sigma' - \sigma'')^q} 
\left\| u \right\|_{(\rho', \sigma')} 
\quad \text{if } \rho'' = \rho', \; \sigma'' < \sigma', \label{eq:cauchythree}
\end{align}
i.e., depending if the domain is shrank in both variables, in the radial direction only or in the angular direction only, respectively. 
\end{prop}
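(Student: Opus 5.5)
The proof reduces both derivatives to statements about single Fourier coefficients. A derivative in $r$ acts only on the coefficient functions $\hat a_n(r),\hat b_n(r)$. A derivative in $\theta$ acts only on the trigonometric factors. I will prove (\ref{eq:cauchyone}) directly; (\ref{eq:cauchytwo}) and (\ref{eq:cauchythree}) are the special cases $q=0$ and $p=0$. In those cases the unshrunk variable keeps the same radius, and the corresponding factor is $1$.

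\emph{Step 1 (setup).} If $\norm{u}{(\rho',\sigma')}=+\infty$ there is nothing to prove, so assume it is finite. Write $u=\hat a_0(r)+\sum_{n\ge1}[\hat a_n(r)\cos(n\theta)+\hat b_n(r)\sin(n\theta)]$. The coefficients are given by $\hat a_n(r)=\pi^{-1}\int_{\mathbb{T}}u(r,\theta)\cos(n\theta)\,d\theta$ (and analogously for $\hat b_n$, with $(2\pi)^{-1}$ for $\hat a_0$). Hence they are holomorphic in the interior of $\ml{G}_{\rho';\mu}$ and continuous up to its boundary. By Remark \ref{rem:two}, $|\hat a_n|_{\rho'},|\hat b_n|_{\rho'}\le \norm{u}{(\rho',\sigma')}e^{-n\sigma'}$. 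The series and all its formally differentiated series therefore converge uniformly on $\ml{D}_{\rho'',\sigma'';\mu}$, because there the decay factor $e^{-n(\sigma'-\sigma'')}$ beats any power of $n$. So term-by-term differentiation is legitimate, and
\[
\partial_r^p\partial_\theta^q u=\sum_{n\ge0} n^q\bigl[\hat a_n^{(p)}(r)\,c_n^{(q)}(\theta)+\hat b_n^{(p)}(r)\,s_n^{(q)}(\theta)\bigr].
\]
Here $c_n^{(q)},s_n^{(q)}$ are $\pm\cos(n\theta)$ or $\pm\sin(n\theta)$. Differentiating in $\theta$ may swap the roles of cosine and sine coefficients, but the norm (\ref{eq:fouriernorm}) treats $\hat a_n$ and $\hat b_n$ symmetrically. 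This gives
\[
\norm{\partial_r^p\partial_\theta^q u}{(\rho'',\sigma'')}\le\sum_{n\ge0}n^q\bigl(|\hat a_n^{(p)}|_{\rho''}+|\hat b_n^{(p)}|_{\rho''}\bigr)e^{n\sigma''}.
\]

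\emph{Step 2 (radial Cauchy estimate).} Fix $\hat r\in\ml{G}_{\rho'';\mu}$. Then $|\hat r-r|\le\rho''$ for some $r\in\mathcal{I}_\mu$. By the triangle inequality, the closed disk of radius $\delta<\rho'-\rho''$ centred at $\hat r$ lies in the interior of $\ml{G}_{\rho';\mu}$. Cauchy's integral formula on that disk gives $|f^{(p)}(\hat r)|\le p!\,\delta^{-p}|f|_{\rho'}$ for $f=\hat a_n,\hat b_n$. Letting $\delta\to\rho'-\rho''$ yields $|\hat a_n^{(p)}|_{\rho''}\le p!(\rho'-\rho'')^{-p}|\hat a_n|_{\rho'}$, and similarly for $\hat b_n$. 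If $p=0$ this step is void and $\rho''=\rho'$ is allowed.

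\emph{Step 3 (angular factor).} Put $x=\sigma'-\sigma''>0$. From $e^{nx}\ge (nx)^q/q!$ we get $n^q e^{n\sigma''}\le q!\,x^{-q}e^{n\sigma'}$. Inserting this and Step 2 into Step 1 gives
\[
\norm{\partial_r^p\partial_\theta^q u}{(\rho'',\sigma'')}\le \frac{p!}{(\rho'-\rho'')^p}\frac{q!}{(\sigma'-\sigma'')^q}\sum_{n\ge0}\bigl(|\hat a_n|_{\rho'}+|\hat b_n|_{\rho'}\bigr)e^{n\sigma'},
\]
and the last sum is exactly $\norm{u}{(\rho',\sigma')}$. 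This proves (\ref{eq:cauchyone}). When $q=0$ the inequality of Step 3 is not needed, so $\sigma''=\sigma'$ is allowed, which gives (\ref{eq:cauchytwo}); (\ref{eq:cauchythree}) follows in the same way with $p=0$.

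The only delicate point is the justification of Step 1: differentiating under the sum, and holomorphy and continuity of the coefficient functions up to the boundary of $\ml{G}_{\rho';\mu}$. The reduction to disks of radius $\delta<\rho'-\rho''$ followed by the limit $\delta\to\rho'-\rho''$ in Step 2 handles the boundary behaviour.
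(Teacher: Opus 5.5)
Your proof is correct. The paper gives no argument for this proposition (the proof is marked \emph{Omissis}, i.e.\ treated as standard), and what you wrote is the standard argument adapted to the weighted Fourier norm (\ref{eq:fouriernorm}). In $r$ you apply a one-variable Cauchy integral estimate to each coefficient $\hat a_n,\hat b_n$ on disks of radius $\delta<\rho'-\rho''$. In $\theta$ you need no Cauchy integral at all: the elementary bound $n^q e^{-n(\sigma'-\sigma'')}\le q!\,(\sigma'-\sigma'')^{-q}$ is absorbed by the exponential weight.

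One small remark on Step 1. In the borderline case (\ref{eq:cauchytwo}), where $\sigma''=\sigma'$, the justification ``the decay factor beats any power of $n$'' does not literally apply. Nothing is lost, however. There $q=0$, and the M-test majorant $p!(\rho'-\rho'')^{-p}(|\hat a_n|_{\rho'}+|\hat b_n|_{\rho'})e^{n\sigma'}$ is summable precisely because $\norm{u}{(\rho',\sigma')}<\infty$.

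Alternatively, you can bypass all convergence questions, because the norm depends only on the Fourier coefficients. Identify the coefficients of $\partial_r^p\partial_\theta^q u(r,\cdot)$ directly: differentiate under the integral sign in $r$, and integrate by parts over $\mathbb{T}$ in $\theta$.
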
 
\proof Omissis. \endproof
The bounds above will be used in the context of a system of domain restrictions. For this purpose, we define  $K \in \mathbb{N}$ to be fixed later. Then, for any $ k = 1,\ldots, K $, we consider the sequence of nested domains $D_{(1 - d_k)(\rho, \sigma)}$ where the ``shrinking scheme'' is given by
\[
d_k := k/(2 K) \mx{,} 
\]
see e.g. \cite{gior03}. We assume as inductive hypothesis that, for all $ j < k $,
\beq{eq:inductive}
\norm{u_j}{(\rho_j,\sigma_j)} \leq U_j, \qquad \norm{w_j}{(\rho_j,\sigma_j)} \leq W_j
\eeq
and aim to determine what is the values of $U_k$ and $W_k$ as a function of the previously determined $U_0,\ldots, U_{k-1}$ and $W_1,\ldots, W_{k-1}$.\\
We assume that, for any $M \in \mathbb{N}$ fixed and $n \in \mathbb{N}$, the sequence $\{a_{\cdot,m}\}_m$ is chosen in such a way  
\beq{eq:decayanm}
\sum_{m=1}^M |a_{n,m}^{(k)}|\leq \Gamma \exp(-n \sigma) \mx{,}
\eeq
for some $\Gamma >0$.
\subsection{Estimates on the known terms}
In this section we aim to derive quantitative bounds on those object which are known at each $k-$th stage of the hierarchy, i.e. the nonlinear forcing term $\mathcal{F}^{(k)}(r,\theta)$, as well as the Dirichlet and Neumann boundary contributions.   
\subsubsection{Nonlinear Forcing}
We are now concerned with an estimate for the terms $|\xi_0^{(k)}(r)|_{\rho_k}, |\xi_n^{(k)}(r)|_{\rho_k}$ and $|\eta_n^{(k)}(r)|_{\rho_k}$.\\
Recalling (\ref{eq:source}), it is immediate to bound
\[
\norm{\mathcal{F}^{(k)}(r,\theta)}{(\rho_{k-1},\sigma_{k-1})} \leq \sum_{j=1}^{k-1} \norm{w_j(r,\theta)}{(\rho_j,\sigma_j)} \norm{u_{k-j}(r,\theta)}{(\rho_{k-j},\sigma_{k-j})} \leq \sum_{j=1}^{k-1} W_j U_{k-j} \mx{,}
\]
where the first inequality holds as either $D_j$ or $D_{k-j}$ are larger than $D_k$, being $j,k-j<k$. The last inequality implies, see Rem. \ref{rem:two}, 
\beq{eq:nonlinearbound}
|\xi_0^{(k)}(r)|_{\rho_{k-1}} \leq \sum_{j=1}^{k-1} W_j U_{k-j}, \qquad |\xi_n^{(k)}(r)|_{\rho_{k-1}},|\eta_n^{(k)}(r)|_{\rho_{k-1}} \leq \left[ \sum_{j=1}^{k-1} W_j U_{k-j} \right]e^{-n \sigma_{k-1}} \mx{.}
\eeq

\subsubsection{Dirichlet Boundary condition}
Let us now deal with a bound for the ``Dirichlet target function'' $\Phi^{(k)}(\theta)$, defined in (\ref{eq:dirichlet}), which will produce an estimate for its Fourier coefficients $|\widetilde{A}_0^{(k)\prime}|_{\rho_k}, |\widetilde{A}_n^{(k)\prime}|_{\rho_k}$ and $|\widetilde{B}_n^{(k)\prime}|_{\rho_k}$. Assumption (\ref{eq:g}) will be used throughout this section as well as the next one.\\ 
First of all, from (\ref{eq:cauchytwo}), by setting $\rho''=:\rho_{k-\frac{1}{2}} < \rho_{k-j}=: \rho'$, we get 
\beq{eq:dirichletboundone}
\norm{\partial_r^j u_{k-j}}{(\rho_{k-\frac{1}{2}},\sigma_{k-j})} 
\leq j! \frac{ \norm{u_{k-j}}{(\rho_{k-j},\sigma_{k-j})}}{(\rho_{k-j}-\rho_{k-\frac{1}{2}})^j}
\leq j! \frac{U_{k-j}}{(d_{k-\frac{1}{2}}-d_{k-j})^j \rho^j}
 \leq j! \left( \frac{4 K}{\rho j} \right)^j U_{k-j} \mx{.}
\eeq
where we have used $d_{k-\frac{1}{2}}-d_{k-j}=(2j-1)/4K \geq j/(4 K)$. Hence, using an arbitrary restriction in the angular direction by monotony, we have
\[
\norm{\Phi^{(k)}}{(\rho_{k-\frac{1}{2}},\sigma_{k-\frac{1}{2}})}  \leq \sum_{j=1}^k \frac{G^j}{j!} j! \left( \frac{4 K}{\rho j} \right)^j U_{k-j}  \leq \mathcal{M} \sum_{j=1}^k U_{k-j} \mx{.}
\]
Where we have used the fact that the function $j \rightarrow (A/j)^j$ has a maximum in $j^*=A/e$, so
\beq{eq:expinequality}
(A/j)^j \leq (A/(A/e))^{A/e} = e^{A/e} \mx{,}
\eeq
then we have defined 
\beq{eq:m}
\left( \frac{4 K}{\rho j} \right)^j  \leq e^{\frac{4 K}{e \rho }}=: \mathcal{M} \mx{.}
\eeq
As a consequence,
\beq{eq:anbound}
|\widetilde{A}_0^{(k)}|_{\rho_{k-\frac{1}{2}}} \leq \mathcal{M} \sum_{j=1}^{k} U_{k-j}, \qquad |\widetilde{A}_n^{(k)}|_{\rho_{k-\frac{1}{2}}},|\widetilde{B}_n^{(k)}|_{\rho_{k-\frac{1}{2}}} \leq \left[ \mathcal{M} \sum_{j=1}^{k} U_{k-j} \right]e^{-n \sigma_{k-\frac{1}{2}}} \mx{.}
\eeq

\subsubsection{Neumann Boundary condition}\label{subsec:nuemannboundary}
Similarly, a bound for the ``Neumann target function'' $\Psi^{(k)}(\theta)$, will produce an estimate on the Fourier coefficients $|\widetilde{A}_0^{(k)\prime}|_{\rho_k}, |\widetilde{A}_n^{(k)\prime}|_{\rho_k}$ and $|\widetilde{B}_n^{(k)\prime}|_{\rho_k}$. Let us recall (\ref{eq:psi}), which we rewrite below for convenience
\begin{align}
\Psi^{(k)}(\theta)
:={}& g'\sum_{j=0}^{k-1}\frac{g^{k-1-j}}{(k-1-j)!}
\partial_r^{k-1-j}\partial_\theta u_j(1,\theta)-\sum_{j=0}^{k-1}\frac{g^{k-j}}{(k-j)!}
\partial_r^{k-j+1}u_j(1,\theta) \nonumber \\
&-2\sum_{j=0}^{k-1}\frac{g^{k-j}}{(k-1-j)!}
\partial_r^{k-j}u_j(1,\theta)-\sum_{j=0}^{k-2}\frac{g^{k-j}}{(k-2-j)!}
\partial_r^{k-1-j}u_j(1,\theta) -C C_k(\theta).\\
& =: (I)+(II)+\ldots+(V). \label{eq:neumannnotation}
\end{align}
With the aim to bound $\| \Psi^{(k)}\|_{(\rho_{k-\frac12},\sigma_{k-\frac12})}$, we derive some estimates of each addend appearing in the latter, after having used standard Cauchy-Schwartz and triangle inequalities, as well as (\ref{eq:g}).\\
Let us now focus on the first addend $(I)$. By using (\ref{eq:cauchyone}), we obtain:
\begin{align*}
\frac{1}{(k-j-1)!}
\left\| \partial_r^{k - 1 - j} \partial_\theta u_j \right\|_{(\rho_{k - \frac12}, \sigma_{k - \frac12})}
& \leq
\frac{ (k - 1 - j)! }{ ( \rho_j - \rho_{k - \frac12} )^{k - 1 - j} ( \sigma_j - \sigma_{k - \frac12})}
\left\| u_j \right\|_{(\rho_j, \sigma_j)}\\
& \leq \frac{ (4K)^{ k - j } }{ ( k - j )^{ k - j } \rho^{ k - 1 - j } \sigma }
\cdot
\norm{u_j}{(\rho_j, \sigma_j)},
\end{align*}
derived by computing the quantities 
\beq{eq:differences}
\rho_j - \rho_{k - \frac12} = (2K)^{-1}(k - j - 1/2) \rho,
\quad
\sigma_j - \sigma_{k - \frac12} = (2K)^{-1}( k - j - 1/2) \sigma,
\eeq
then bounding $(k-j-1/2)^{k-j} \geq [(k-j)/2]^{k-j}$. As a consequence, the complete bound for the sum is:
\[
\sum_{j=0}^{k-1}
\frac{ (4K)^{ k - j } }{ ( k - j )^{ k - j } \rho^{ k - 1 - j } \sigma }
\left\| u_j \right\|_{(\rho_j, \sigma_j)} \leq \frac{\rho}{\sigma}  \sum_{j=0}^{k-1} \left( \frac{4K}{\rho(k-j)}\right)^{k-j} U_j \mx{.}
\]
This bound yields, by using (\ref{eq:expinequality}) once more and the inductive hypothesis, the following estimate
\beq{eq:neu_i}
\norm{(I)}{(\rho_{k - \frac12},\sigma_{k- \frac12})} \leq \frac{\rho}{\sigma} \mathcal{M}  \sum_{j=0}^{k-1} G^{k-j}U_{j} \leq \frac{\rho}{\sigma} \mathcal{M} G  \sum_{j=0}^{k-1} U_{j} \mx{.}
\eeq
As for the second term, (\ref{eq:cauchythree}) yields
\begin{align*}
\frac{1}{(k-j)!}
\norm{\partial_r^{ k - j + 1 } u_j}{(\rho_{k-\frac12}, \sigma_{k-\frac12})}
& \leq
\frac{ ( k - j + 1 ) }{ ( \rho_j - \rho_{k-\frac12} )^{ k - j + 1 } }
\left\| u_j \right\|_{(\rho_j, \sigma_j)}\\
& \leq
\frac{(k - j + 1) (4K)^{k - j + 1} }{ (k - j)^{k - j + 1} \rho^{k - j + 1} }
\left\| u_j \right\|_{(\rho_j, \sigma_j)} \\
& \leq (K+1)  \left( \frac{4 K}{\rho (k-j)}\right)^{k-j+1} U_j \mx{.}
\end{align*}
At this point, by setting $m=k-j$, we observe that 
\[
(A/m)^{m+1} \leq (A/m) e^{A/e} \leq A e^{A/e}  \leq e [(1/e) e^{2A/e}] = e^{2A/e} \mx{.} 
\]
where we have used (\ref{eq:expinequality}) and that $x \exp(x) \leq \exp(2x)$, for all $x \geq 0$. This leads to 
\beq{eq:neu_ii}
\norm{(II)}{(\rho_{k-\frac12}, \sigma_{k-\frac12})} \leq  \mathcal{M}^2 G (K+1)
\sum_{j=0}^{k-1} U_j \mx{.}
\eeq
The addends $(III)$ and $(IV)$ are treated analogously, obtaining
\begin{align}
\norm{(III)}{(\rho_{k-\frac12}, \sigma_{k-\frac12})} & \leq 
2K \mathcal{M} G 
\sum_{j=0}^{k-1} U_j \mx{,} \label{eq:neu_iii}\\
\norm{(IV)}{(\rho_{k-\frac12}, \sigma_{k-\frac12})} & \leq K \mathcal{M} G 
\sum_{j=0}^{k-2} U_j \leq K \mathcal{M} G 
\sum_{j=0}^{k-1} U_j\mx{.} \label{eq:neu_iv}
\end{align}
Let us finally address the problem of bounding the term $(V)$. To this end, let us recall the standard bounds 
\[
\Bigl|\binom{\tfrac12}{m}\Bigr|\le 1,\qquad \binom{m}{r}\le 2^m.
\]
By using $G=1/4$ directly, we get
\begin{align*}
\left\|
\binom{\tfrac12}{m}\binom{m}{k-m}(2g)^{2m-k}(g^2+(g')^2)^{k-m}
\right\|_{(\rho_{k-\frac12},\sigma_{k-\frac12})}
& \le
2^m\Bigl(\frac14\Bigr)^{2m-k}\Bigl(\frac1{32}\Bigr)^{k-m}\\
& = 2^{2m-3k} \mx{,}\\
\left\|
g\binom{\tfrac12}{m}\binom{m}{k-1-m}(2g)^{2m-(k-1)}(g^2+(g')^2)^{k-1-m}
\right\|_{(\rho_{k-\frac12},\sigma_{k-\frac12})}
& \le
2^m\Bigl(\frac18\Bigr)\Bigl(\frac14\Bigr)^{2m-k+1}\Bigl(\frac1{32}\Bigr)^{k-1-m}\\
& = 2^{2m-3k} \mx{.}  
\end{align*}
We notice that, for all $m \leq k$, $2^{2m-3k}=2^{-k}2^{2(m-k)}\le 2^{-k}$, hence
\begin{align*}
\sum_{m=\lceil k/2\rceil}^{k}2^{2m-3k}
& \le \sum_{m=\lceil k/2\rceil}^{k}2^{-k}
= \bigl(k-\lceil k/2\rceil+1\bigr) 2^{-k}
\le (k+1) 2^{-k} \mx{,} \\
\sum_{m=\lceil (k-1)/2\rceil}^{k-1}2^{2m-3k}
& \le \sum_{m=\lceil (k-1)/2\rceil}^{k-1}2^{-k}
= \bigl(k-\lceil (k-1)/2\rceil\bigr) 2^{-k}
\le k 2^{-k} \mx{.}  
\end{align*}
As a consequence, we obtain 
\[
\|C_k\|_{(\rho_{k-\frac12},\sigma_{k-\frac12})}
\le
\sum_{m=\lceil k/2\rceil}^{k}2^{2m-3k}
+
\sum_{m=\lceil (k-1)/2\rceil}^{k-1}2^{2m-3k}
\le
(k+1)2^{-k}+k2^{-k}
\le
(2k+1)2^{-k}, 
\]
implying
\beq{eq:neu_v}
\|C C_k\|_{(\rho_{k-\frac12},\sigma_{k-\frac12})}
\le
C(2k+1) 2^{-k} = (2 k+1)2^{-k-1} \leq 1 \mx{.}
\eeq
In conclusion, by collecting \eqref{eq:neu_i}, \eqref{eq:neu_ii}, \eqref{eq:neu_iii}, \eqref{eq:neu_iv} and \eqref{eq:neu_v}, then using 
\[
\mathcal{M}[\rho/\sigma+(K+1)\ml{M}+(5/4)K]\sum_{j=0}^{k-1}G^{k-j} \leq G \mathcal{M}[\rho/\sigma+4 \mathcal{M}(K+1)] \leq \mathcal{M}[\rho/\sigma+\mathcal{M}(K+1)] \mx{,}
\]
recall (\ref{eq:g}), we get 
\begin{align}
|\widetilde{A}_0^{(k)\prime}|_{\rho_{k-\frac12}} & \leq 1 + \mathcal{M} \left[\frac{\rho}{\sigma} + \mathcal{M} (K+1)\right] \sum_{j=0}^{k-1} U_j \mx{,} \label{eq:azpbound}
\\
|\widetilde{A}_n^{(k)\prime}|_{\rho_{k-\frac12}}, \; |\widetilde{B}_n^{(k)\prime}|_{\rho_{k-\frac12}}  & \leq
\left\{1 + \mathcal{M} \left[\frac{\rho}{\sigma} + \mathcal{M} (K+1)\right] \sum_{j=0}^{k-1} U_j \right\} e^{-n \sigma_{k-\frac12}} \mx{.} \label{eq:anpbound}
\end{align}

\subsection{Estimate for $w_k(r,\theta)$}
The aim of this section is to determine a bound for the following objects
\[
|a_0^{(k)}(r)|_{\rho_k}, \qquad \sum_{n=1}^{\infty} |a_n^{(k)}(r)|_{\rho_k} e^{n \sigma_k}, \qquad \sum_{n=1}^{\infty} |b_n^{(k)}(r)|_{\rho_k} e^{n \sigma_k} \mx{.}
\]
Let us start with some preliminary estimates
\begin{prop}\label{prop:inequalities} The following inequalities hold
\begin{align}
\sum_{n=1}^{\infty} (4 + n) e^{-n \sigma_{k-1}} e^{n \sigma_k} & \leq 
2 K (e\sigma)^{-1} + 4 \label{eq:prelone}, \\
\sum_{n=1}^{\infty} (2 + n)(4 + n) e^{-n \sigma_{k - \frac12}} e^{n \sigma_k} & \leq 
64 K^2 (\sigma e)^{-2} + 24 K (\sigma e)^{-1} + 8, \label{eq:preltwo} \\
\sum_{n=1}^{\infty} n (2 + n)(4 + n) e^{-n \sigma_{k - \frac12}} e^{n \sigma_k} & \leq  
1728 K^3 (\sigma e)^{-3}
+ 384 K^2 (\sigma e)^{-2} + 32 K (\sigma e)^{-1}. \label{eq:prelthree}
\end{align}
\end{prop}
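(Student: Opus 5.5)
The plan is to reduce all three inequalities to the elementary maximisation already used in (\ref{eq:expinequality}), namely
\[
\sup_{x>0} x^p e^{-x\delta}=\Bigl(\frac{p}{e\delta}\Bigr)^p,\qquad p\ge 1,\ \delta>0,
\]
applied to the polynomial weights with the decay rates fixed by the shrinking scheme.

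\textbf{Step 1: decay rates.} Since $\sigma_k=(1-d_k)\sigma$ with $d_k=k/(2K)$, the exponents telescope to
\[
e^{-n\sigma_{k-1}}e^{n\sigma_k}=e^{-n\delta_1},\quad \delta_1:=\sigma_{k-1}-\sigma_k=\frac{\sigma}{2K},
\qquad
e^{-n\sigma_{k-\frac12}}e^{n\sigma_k}=e^{-n\delta_2},\quad \delta_2:=\frac{\sigma}{4K},
\]
in the same way as (\ref{eq:differences}). Every summand is then a polynomial in $n$ times $e^{-n\delta}$ with $\delta\in\{\delta_1,\delta_2\}$.

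\textbf{Step 2: expand the weights.} I write $(4+n)=n+4$, $(2+n)(4+n)=n^2+6n+8$ and $n(2+n)(4+n)=n^3+6n^2+8n$. Each monomial is estimated with the maximisation above:
\begin{itemize}
\item In (\ref{eq:prelone}), $n e^{-n\delta_1}\le 1/(e\delta_1)=2K(e\sigma)^{-1}$ and $4e^{-n\delta_1}\le 4$.
\item In (\ref{eq:preltwo}), $n^2e^{-n\delta_2}\le (2/(e\delta_2))^2=64K^2(e\sigma)^{-2}$, $6ne^{-n\delta_2}\le 24K(e\sigma)^{-1}$, and the constant contributes $8$.
\item In (\ref{eq:prelthree}), $n^3e^{-n\delta_2}\le(3/(e\delta_2))^3=1728K^3(e\sigma)^{-3}$, $6n^2e^{-n\delta_2}\le 384K^2(e\sigma)^{-2}$ and $8ne^{-n\delta_2}\le 32K(e\sigma)^{-1}$.
\end{itemize}
These reproduce exactly the constants displayed in the statement, which confirms that this is the intended mechanism.

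\textbf{Main obstacle.} Step 2 controls each summand uniformly in $n$, i.e. the quantity $\sup_n(\cdot)$, not the series itself. Summing honestly gives, for instance, $\sum_n e^{-n\delta}\sim\delta^{-1}$, which is an extra factor of order $K/\sigma$. That factor cannot be absorbed while keeping the constants $4$, $8$ and so on as displayed. To prove the statement exactly as worded, I would therefore read the left-hand sides as the suprema over $n$ of the summands. This is the form in which the bounds enter the next estimates: coefficientwise bounds such as (\ref{eq:nonlinearbound}), (\ref{eq:anbound}) and (\ref{eq:anpbound}) are multiplied by $\chi_n$ or $(4+n)$, and the maximum over modes must be controlled.

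I would state this interpretation explicitly at the start of the proof. I would then check in the subsequent estimate for $w_k$ that only this maximum, combined with Rem.~\ref{rem:two}, is used. If a genuine sum is needed there, the remedy is to accept the extra $\delta^{-1}$ factor, which only modifies the $O(1)$ constants $\mathscr{C}_j$ later on.
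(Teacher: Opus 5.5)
Your Step 2 is exactly the paper's argument. The paper expands $n(2+n)(4+n)=n^3+6n^2+8n$ and applies $n^qe^{-an}\le (q/(ea))^q$ ``for each addend'', which reproduces the displayed constants. Your rates $\delta_1=\sigma/(2K)$ and $\delta_2=\sigma/(4K)$ are the right ones; the paper's ``$a:=\sigma_{k-1}-\sigma_k=\sigma/(4K)$'' is a slip for $\sigma_{k-\frac12}-\sigma_k$. The obstacle you flag is genuine, and it is precisely the gap in the paper's proof, which silently uses a uniform bound on the summands as a bound on the series. In fact the proposition is false as stated. For $K=\sigma=1$, the first two terms of (\ref{eq:prelone}) already give $5e^{-1/2}+6e^{-1}\approx 5.24>2/e+4\approx 4.74$. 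For large $K/\sigma$ its left-hand side grows like $\delta_1^{-2}=4K^2/\sigma^2$, against a right-hand side linear in $K/\sigma$. Inequalities (\ref{eq:preltwo}) and (\ref{eq:prelthree}) fail in the same way, by one power of $K/\sigma$. So no argument proves the literal statement, and you are right not to claim one.

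Where your proposal goes wrong is the preferred repair. Reading the left-hand sides as $\sup_n$ makes Step 2 a complete proof, but that version does not serve what follows:
\begin{itemize}
\item the norm (\ref{eq:fouriernorm}) is a sum over modes;
\item the estimate for $w_k$ concludes with $\lVert w_k\rVert_{(\rho_k,\sigma_k)}=|a_0^{(k)}|_{\rho_k}+\sum_n|a_n^{(k)}|_{\rho_k}e^{n\sigma_k}+\sum_n|b_n^{(k)}|_{\rho_k}e^{n\sigma_k}$;
\item even the $\Gamma$-term is summed, via (\ref{eq:sumseries}). That step has the same defect: $\sum_{n\ge1}e^{-nk\sigma/(2K)}=(e^{k\sigma/(2K)}-1)^{-1}$ is of order $2K/(k\sigma)$ for $k\ll K$, not bounded by $2$.
\end{itemize}
Rem.~\ref{rem:two} runs the other way (norm $\Rightarrow$ coefficient decay). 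Converting coefficientwise bounds back into a Fourier norm forces you to give up part of the strip, which reintroduces a factor $\delta^{-1}$.

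So your conditional fallback is the only viable route, and it should replace the proposition. Since $x\mapsto x^qe^{-\delta x}$ is unimodal, $\sum_{n\ge1}f(n)\le\int_0^\infty f+\max f$, hence
\[
\sum_{n\ge 1}n^qe^{-n\delta}\le \frac{q!}{\delta^{q+1}}+\Bigl(\frac{q}{e\delta}\Bigr)^q\quad (q\ge 1),\qquad \sum_{n\ge 1}e^{-n\delta}\le\frac{1}{\delta}.
\]
Each right-hand side therefore gains one power of $K/\sigma$; for instance, the leading term of (\ref{eq:prelthree}) becomes $6(4K/\sigma)^4$. Downstream this raises the degree in $K$ of the polynomials bounding $a$ and $b$ from $3$ to $4$. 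The argument behind (\ref{eq:propexp}) absorbs this with $[4/(e\alpha)]^4$ in place of $[3/(e\alpha)]^3$, so, as you anticipated, only the $O(1)$ constants change.
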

\proof Let us show how to proceed with (\ref{eq:prelthree}), being the others similar. By setting $a:=\sigma_{k - 1} - \sigma_k=\sigma/(4K)$, it is sufficient to write
\[
\sum_{n=1}^{\infty} (n^3 + 6n^2 + 8n) e^{-n a} = \sum_{n=1}^{\infty} n^3 e^{-n a} + 6 \sum_{n=1}^{\infty} n^2 e^{-n a} + 8 \sum_{n=1}^{\infty} n e^{-n a} \mx{,}
\]
then use, for each addend, the inequality
\[
n^q e^{-a n}  \leq  \left( q/a \right)^q e^{-q} \mx{,}
\]
whose proof is analogous to the one for (\ref{eq:expinequality}).
\endproof
Let us now recall the expressions (\ref{eq:anseries}) and the splitting
\[
a_n^{(k)}(r)=a_{n,0}^{(k)}+\sum_{m=1}^{M} a_{n,m}^{(k)}, \qquad 
b_n^{(k)}(r)=b_{n,0}^{(k)}+\sum_{m=1}^{M} b_{n,m}^{(k)}, \qquad 
\]
for $n \geq 0$ and $n \geq 1$, respectively, understood $\sum_{m=1}^M \cdot =0$ if $M=0$, where the first term of the r.h.s. has been constructed via (\ref{eq:azz}), (\ref{eq:anm}) and (\ref{eq:bnm}), the other terms being arbitrary.\\Let us firstly find a bound for (\ref{eq:anm}). For this purpose, by using in particular the monotonicity $|f(r)|_{\rho_{k}} \leq |f(r)|_{\rho_{k-\frac12}}$, we can proceed as follows
\begin{align*}
|a_n^{(k)}(r)|_{\rho_k}  \leq & |a_{n,0}^{(k)}|_{\rho_{k-\frac12}} + \sum_{m=1}^{M} |a_{n,m}^{(k)}(r)|_{\rho_{k-\frac12}} \\ 
\leq &  2(2+n)(4+n) |\widetilde{A}_n^{(k)\prime}|_{\rho_{k-\frac12}} + 2 n (2+n)(4+n) |\widetilde{A}_n^{(k)}|_{\rho_{k-\frac12}}\\
+& 2(2+n)(4+n) \left| \int_0^1 s^{1+n}ds \right| |\xi_{n}^{(k)}(r)|_{\rho_{k-\frac12}}\\
+& 2(2+n)(4+n) \cdot \frac{1}{2} \sum_{m=1}^{M} \frac{|a_{n,m}^{(k)}|_{\rho_{k-\frac12}}}{(2+n+m)(4+n+m)} + \sum_{m=1}^{M} |a_{n,m}^{(k)}|_{\rho_{k-\frac12}} \\
\leq & 2(2+n)(4+n) \left\{1 + \mathcal{M} \left[\frac{\rho}{\sigma} + \mathcal{M} (K+1)\right] \sum_{j=0}^{k-1} U_j \right\} e^{-n \sigma_{k-\frac12}} \\
+& 2 n (2+n)(4+n) \left[ \mathcal{M} \sum_{j=1}^{k} U_{k-j} \right]e^{-n \sigma_{k-\frac{1}{2}}} \\
+&  2 (n+4) \left[ \sum_{j=1}^{k-1} W_j U_{k-j} \right]e^{-n \sigma_{k-1}} + 2 \Gamma e^{-n \sigma} \mx{,}
\end{align*}
where (\ref{eq:anpbound}), (\ref{eq:anbound}), (\ref{eq:nonlinearbound}), the trivial bound 
  \[
2 (n+2) (n+4) \cdot \frac{1}{2} \sum_{m=1}^{M} \frac{|a_{n,m}^{(k)}|}{(2+n+m)(4+n+m)} \leq \sum_{m=1}^{M} |a_{n,m}^{(k)}| \mx{,}
\]
then $\int_0^1 s^{1+n}ds=(n+2)^{-1}$ and finally assumption (\ref{eq:decayanm}) have been used. \\
It is now sufficient to multiply by $e^{n \sigma_k}$ and use the inequalities found in Prop. \ref{prop:inequalities}, to get
\begin{align}
|a_n^{(k)}(r)|_{\rho_k}e^{n \sigma_k} \leq &  \left\{1 + \mathcal{M} \left[\frac{\rho}{\sigma} + \mathcal{M} (K+1)\right] \sum_{j=0}^{k-1} U_j \right\} \left\{ \frac{128 K^2 }{\sigma^2 e^2} + \frac{48 K }{\sigma e} + 16 \right\} \nonumber \\
+ & \left[ \mathcal{M} \sum_{j=1}^{k} U_{k-j} \right] \left\{\frac{3456 K^3 }{\sigma^3 e^3}
+ \frac{768 K^2 }{\sigma^2 e^2} + \frac{64 K }{\sigma e} \right\} \nonumber \\
+ & \left[ \sum_{j=1}^{k-1} W_j U_{k-j} \right] \left\{\frac{4K}{e\sigma}+8 \right\} + 
4 \Gamma  \nonumber \\
=: & \frac{1}{3} \left[\mathcal{W}_1+ \mathcal{W}_2 \sum_{j=0}^{k-1} U_j + \mathcal{W}_3 \sum_{j=1}^{k-1} W_j U_{k-j} \right]\mx{,} \label{eq:ankrbig}
\end{align}
where we have observed, in order to obtain $4 \Gamma$ in the previous bound, that 
\beq{eq:sumseries}
\sum_{n=1}^{\infty} e^{-n \frac{k \sigma}{2 K}} = \left(1- e^{-\frac{k \sigma}{2 K}}\right)^{-1} e^{-\frac{k\sigma}{2 K}} \in (1,2) \mx{,}
\eeq
for all $\sigma \leq 2 K \log(2)$ i.e. for all $\sigma \leq 1$ by assumption on $K$. Furthermore, we have used that $a \sum_{j=0}^{k-1} U_j + b \sum_{j=1}^{k} U_{k-j} = (a + b) \sum_{j=0}^{k-1} U_j $ and introduced the definitions
\begin{align}
\frac{\mathcal{W}_1}{3} &= \frac{128 K^2 e^{-2}}{\sigma^2} + \frac{48 K e^{-1}}{\sigma} + 16
+ 4 \Gamma, \label{eq:wuno}\\
\frac{\mathcal{W}_2}{3} &= \mathcal{M} \left[\frac{\rho}{\sigma} + \mathcal{M}(K+1)\right] \left( \frac{128 K^2 e^{-2}}{\sigma^2} + \frac{48 K e^{-1}}{\sigma} + 16 \right) \nonumber \\
&+ \mathcal{M} \left( \frac{3456 K^3 e^{-3}}{\sigma^3} + \frac{768 K^2 e^{-2}}{\sigma^2} + \frac{64 K e^{-1}}{\sigma} \right),  \label{eq:wdue}\\
\frac{\mathcal{W}_3}{3} &= \frac{4 K}{e \sigma} + 8. \label{eq:wtre}
\end{align}
Analogously, 
\[
|b_n^{(k)}(r)|_{\rho_k}e^{n \sigma_k} \leq \frac{1}{3} \left[ \mathcal{W}_1+ \mathcal{W}_2 \sum_{j=0}^{k-1} U_j + \mathcal{W}_3 \sum_{j=1}^{k-1} W_j U_{k-j} \right] \mx{.}
\]
On the other hand, the average value is given by
\begin{align*}
|a_0^{(k)}(r)|_{\rho_k}  \leq & |a_{0,0}^{(k)}|_{\rho_{k-\frac12}} + \sum_{m=1}^{M} |a_{0,m}^{(k)}(r)|_{\rho_{k-\frac12}} \\ 
\leq & 16 \left[ |\widetilde{A}_0^{(k)\prime}|_{\rho_{k-\frac12}}  + \left| \int_0^1 s ds \right| |\xi_{0}^{(k)}(r)|_{\rho_{k-\frac12}} + \frac{1}{2} \sum_{m=1}^{M} \frac{|a_{0,m}^{(k)}|_{\rho_{k-\frac12}}}{(2+m)(4+m)} \right] + \sum_{m=1}^{M} |a_{0,m}^{(k)}|_{\rho_{k-\frac12}}  \\
\leq &  16 \left\{1 + \mathcal{M} \left[\frac{\rho}{\sigma} + \mathcal{M} (K+1)\right] \right\} \sum_{j=0}^{k-1} U_j + 8 \sum_{j=1}^{k-1} W_j U_{k-j} + 2 \Gamma \mx{,}
\end{align*}
having used 
\[
8 \sum_{m=1}^{M} \frac{|a_{0,m}^{(k)}|_{\rho_{k-\frac12}}}{(2+m)(4+m)} \leq 
\sum_{m=1}^{M} |a_{0,m}^{(k)}(r)|_{\rho_{k-\frac12}} \leq \Gamma \mx{.}
\]
After a trivial overestimation of the constants at hand, this leads to
\beq{eq:azkr}
|a_0^{(k)}(r)|_{\rho_k} \leq \frac{1}{3} \left[ \mathcal{W}_1+ \mathcal{W}_2 \sum_{j=0}^{k-1} U_j + \mathcal{W}_3 \sum_{j=1}^{k-1} W_j U_{k-j} \right] \mx{.}
\eeq
Hence, in conclusion,
\begin{align*}
\norm{w_k}{(\rho_k,\sigma_k)} & = |a_0^{(k)}(r)|_{\rho_k} + 
\sum_{n=1}^{\infty} |a_n^{(k)}(r)|_{\rho_k}e^{n \sigma_k} + 
\sum_{n=1}^{\infty} |b_n^{(k)}(r)|_{\rho_k}e^{n \sigma_k}\\ 
& \leq \mathcal{W}_1+ \mathcal{W}_2 \sum_{j=0}^{k-1} U_j + \mathcal{W}_3 \sum_{j=1}^{k-1} W_j U_{k-j} \mx{,}
\end{align*}
which leads to the recurrence equation
\beq{eq:recurrencewk}
W_k=\mathcal{W}_1+ \mathcal{W}_2 \sum_{j=0}^{k-1} U_j + \mathcal{W}_3 \sum_{j=1}^{k-1} W_j U_{k-j} \mx{.}
\eeq
\subsection{Estimate for $u_k(r,\theta)$}
The aim of this section is to find a bound for 
\[
|A_0^{(k)}(r)|_{\rho_{k}}, \quad
\sum_{n=1}^{\infty} |A_n^{(k)}(r)|_{\rho_{k}} e^{n \sigma_{k}}, \quad 
\sum_{n=1}^{\infty} |B_n^{(k)}(r)|_{\rho_{k}} e^{n \sigma_{k}} \mx{.}
\]
First of all, from (\ref{eq:azeropre}),
\[
|A_0^{(k)}(r)|_{\rho_{k}} \leq |\widetilde{A}_0^{(k)}| + |\mathfrak{f}_0^{(k)}(r)|_{\rho_{k}} \int_r^1 s^{-1} \int_0^s s' ds' \leq |\widetilde{A}_0^{(k)}| + |\mathfrak{f}_0^{(k)}(r)|_{\rho_{k}} (1-r) \leq |\widetilde{A}_0^{(k)}| + |\mathfrak{f}_0^{(k)}(r)|_{\rho_{k}} \mx{.}
\]
Let us now preliminarily observe that, for all $n \geq 1$,
\begin{align*}
\left| r^n \int_r^1 s^{1-n}ds \right| & \leq r^n \int_r^1 1^{1-n} ds \leq r^n (1-r) \leq r^n \leq 1, \\
\left| r^{-n} \int_0^r s^{1+n}ds \right| & \leq r^{-n} \int_0^r r^{n+1} ds \leq r^{-n} \int_0^r r^{n-1} ds = n^{-1} r^{-n}  r^n \leq 1 \mx{.}
\end{align*}
Then, from (\ref{eq:an}) and (\ref{eq:bn}), by using the two bounds above
\begin{align*}
|A_n^{(k)}(r)|_{\rho_{k}} \leq |C_n^{(k)}| + (2 n)^{-1}  |\mathfrak{f}_n^{(k)}(r)|_{\rho_{k}} + (2 n)^{-1}  |\mathfrak{f}_n^{(k)}(r)|_{\rho_{k}} \leq |C_n^{(k)}| + |\mathfrak{f}_n^{(k)}(r)|_{\rho_{k}} \mx{.}
\end{align*}
Similarly, we find
\[
|B_n^{(k)}(r)|_{\rho_{k}} \leq |D_n^{(k)}| + |\mathfrak{g}_n^{(k)}(r)|_{\rho_{k}} \mx{.}
\]
On the other hand, by recalling the definitions (\ref{eq:cean}) and (\ref{eq:cebn}), 
\[
|\mathfrak{f}_n^{(k)}(r)|_{\rho_{k}} \leq (1/4) |a_n^{(k)}(r)|_{\rho_{k}}  + |\xi_n^{(k)}(r)|_{\rho_{k}}, \qquad 
|\mathfrak{g}_n^{(k)}(r)|_{\rho_{k}} \leq (1/4) |b_n^{(k)}(r)|_{\rho_{k}}  + |\eta_n^{(k)}(r)|_{\rho_{k}} \mx{.}
\]
As a consequence, by using the values determined in (\ref{eq:cndn}) for $C_n^{(k)}$ and $D_n^{(k)}$, we obtain for all $n \geq 1$, 
\begin{align*}
|A_0^{(k)}(r)|_{\rho_{k}} & \leq |\widetilde{A}_0^{(k)}| +  |a_0^{(k)}(r)|_{\rho_{k}} +  |\xi_0^{(k)}(r)|_{\rho_{k}},  \\ 
|A_n^{(k)}(r)|_{\rho_{k}} & \leq |\widetilde{A}_n^{(k)}| + |\widetilde{A}_n^{(k) \prime}| +   |a_n^{(k)}(r)|_{\rho_{k}} +  |\xi_n^{(k)}(r)|_{\rho_{k}}, \\
|B_n^{(k)}(r)|_{\rho_{k}} & \leq |\widetilde{B}_n^{(k)}| + |\widetilde{B}_n^{(k) \prime}| +   |b_n^{(k)}(r)|_{\rho_{k}} +  |\eta_n^{(k)}(r)|_{\rho_{k}} \mx{.}
\end{align*}
Let us start by finding a bound for the first of them. Recalling the domain monotony property, then (\ref{eq:anbound}), (\ref{eq:azkr}) and the first of (\ref{eq:nonlinearbound}),
\begin{align}
|A_0^{(k)}(r)|_{\rho_{k}} & \leq |\widetilde{A}_0^{(k)}| +  |a_0^{(k)}(r)|_{\rho_{k-\frac12}} +  |\xi_0^{(k)}(r)|_{\rho_{k-\frac12}} \nonumber \\
& \leq \mathcal{M} \sum_{j=1}^k U_{k-j} +
\frac{1}{3} \left[ \mathcal{W}_1+ \mathcal{W}_2 \sum_{j=0}^{k-1} U_j + \mathcal{W}_3 \sum_{j=1}^{k-1} W_j U_{k-j} \right]  + \sum_{j=1}^{k-1} W_j U_{k-j} \nonumber \\
& \leq \frac{1}{3} \left[ \mathcal{W}_1+ (\mathcal{W}_2 + 3 \mathcal{M})\sum_{j=0}^{k-1} U_j + (\mathcal{W}_3 + 3) \sum_{j=1}^{k-1} W_j U_{k-j} \right] \label{eq:capitalazkr} \mx{.}
\end{align}
Then we can proceed with the second one, which requires once more (\ref{eq:anbound}), then (\ref{eq:anpbound}), as well as the second of (\ref{eq:nonlinearbound})
\begin{align*}
|A_n^{(k)}(r)|_{\rho_{k}} & \leq \left[  \mathcal{M} \sum_{j=1}^k U_{k-j}  \right] e^{-n \sigma_{k - \frac12}} + \left\{1 + \mathcal{M} \left[\frac{\rho}{\sigma} + \mathcal{M} (K+1)\right] \sum_{j=0}^{k-1} U_j \right\} e^{-n \sigma_{k - \frac12}}\\
& + |a_n^{(k)}(r)|_{\rho_{k-\frac12}}+ \left[ \sum_{j=1}^{k-1} W_j U_{k-j}  \right] e^{-n \sigma_{k - \frac12}} \mx{,}
\end{align*}
it is sufficient to multiply both sides by $e^{n \sigma_k}$ and use (\ref{eq:ankrbig}), to get 
\begin{align*}
|A_n^{(k)}(r)|_{\rho_{k}} e^{n \sigma_k} & \leq  2 \left[  \mathcal{M} \sum_{j=1}^k U_{k-j}  
+ 1 + \mathcal{M} \left[\frac{\rho}{\sigma} + \mathcal{M} (K+1)\right] \sum_{j=0}^{k-1} U_j + \sum_{j=1}^{k-1} W_j U_{k-j}  \right] \\
& + \frac{1}{3} \left[ \mathcal{W}_1+ \mathcal{W}_2 \sum_{j=0}^{k-1} U_j + \mathcal{W}_3 \sum_{j=1}^{k-1} W_j U_{k-j} \right]  \\
& =: \frac{1}{3} \left[ \mathcal{U}_1+ \mathcal{U}_2 \sum_{j=0}^{k-1} U_j + \mathcal{U}_3 \sum_{j=1}^{k-1} W_j U_{k-j} \right] \mx{,}
\end{align*}
where, similarly to (\ref{eq:sumseries}), we have used that $\sum_{n=1}^{\infty} \exp(-n \sigma_{k - \frac12} + n \sigma_k )  \in (1,2)$ under the same condition on $\sigma$, and we have defined
\begin{align}
\mathcal{U}_1 & := \mathcal{W}_1 + 6 \label{eq:uuno} \mx{,}\\
\mathcal{U}_2 & := \mathcal{W}_2 + 6 \mathcal{M}  (1+ \rho^{-1}\sigma + \mathcal{M}(K+1)) \label{eq:udue} \mx{,}\\
\mathcal{U}_3 & := \mathcal{W}_3 + 6 \label{eq:utre} \mx{.}
\end{align}
Similarly, 
\[
|B_n^{(k)}(r)|_{\rho_{k}} e^{n \sigma_k} \leq \frac{1}{3} \left[ \mathcal{U}_1+ \mathcal{U}_2 \sum_{j=0}^{k-1} U_j + \mathcal{U}_3 \sum_{j=1}^{k-1} W_j U_{k-j} \right] \mx{.}
\]
Now it is evident that 
\[
\mathcal{U}_2 > \mathcal{W}_2+3\mathcal{M}\quad ; \quad \mathcal{U}_3 > \mathcal{W}_3+3 \mx{,}
\]
hence we can replace the estimate on (\ref{eq:capitalazkr}) with 
\[
|A_0^{(k)}(r)|_{\rho_{k}}  \leq \frac{1}{3} \left[ \mathcal{U}_1+ \mathcal{U}_2 \sum_{j=0}^{k-1} U_j + \mathcal{U}_3 \sum_{j=1}^{k-1} W_j U_{k-j} \right] \mx{.}
\]
In conclusion, we obtain the following bound
\begin{align}
\norm{u_k}{(\rho_k,\sigma_k)} & = |A_0^{(k)}(r)|_{\rho_k} + 
\sum_{n=1}^{\infty} |A_n^{(k)}(r)|_{\rho_k}e^{n \sigma_k} + 
\sum_{n=1}^{\infty} |B_n^{(k)}(r)|_{\rho_k}e^{n \sigma_k} \nonumber \\
& \leq \mathcal{U}_1+ \mathcal{U}_2 \sum_{j=0}^{k-1} U_j + \mathcal{U}_3 \sum_{j=1}^{k-1} W_j U_{k-j} \mx{,}
\end{align}
leading to the second recurrence equation
\beq{eq:recurrenceuk}
U_k=\mathcal{U}_1+ \mathcal{U}_2 \sum_{j=0}^{k-1} U_j + \mathcal{U}_3 \sum_{j=1}^{k-1} W_j U_{k-j} \mx{.}
\eeq

\section{Majorising Sequence Construction}
Let us consider the system of recurrence equations formed by (\ref{eq:recurrencewk}) and (\ref{eq:recurrenceuk}), which are reported below for the reader's convenience
\beq{eq:recurrencesystem}
\left\{
\begin{aligned}
W_k &= \mathcal{W}_1 + \mathcal{W}_2 \sum_{j=0}^{k-1} U_j + \mathcal{W}_3 \sum_{j=1}^{k-1} W_j U_{k-j}\\
U_k &= \mathcal{U}_1 + \mathcal{U}_2 \sum_{j=0}^{k-1} U_j + \mathcal{U}_3 \sum_{j=1}^{k-1} W_j U_{k-j}
\end{aligned}
\right. \mx{,}
\eeq
where $\mathcal{W}_i, \mathcal{U}_i \geq 0$ are constants, see (\ref{eq:wuno})-(\ref{eq:wtre}) and (\ref{eq:uuno})-(\ref{eq:utre}), with $W_0=0$ by construction while we shall set $U_0 := 1$ for simplicity, as described in the following  
\begin{rem}\label{rem:rho} As anticipated in Rem. \ref{rem:sigma}, by choosing $U_0$ we set a bound on $\rho$. More precisely we have $|(1-z^2)/4| \leq 4^{-1}|1-\rho \exp( 2 i \phi)| \leq 4^{-1}(1+\rho^2) \leq U_0$ (here $\phi$ is the anomaly of $z$). In particular, by setting $U_0=1$ we have $\rho \leq \sqrt{3}$. Hence, we can still choose $\rho =1$.
\end{rem}
It is straightforward to check by induction that the new sequence $\{Z_k\}_{k\geq 0}$, defined by
\[
Z_k := \mathcal{Z}_1 + \mathcal{Z}_2 \sum_{j=0}^{k-1} Z_j + \mathcal{Z}_3 \sum_{j=1}^{k-1} Z_j Z_{k-j} \mx{,}
\]
with $Z_0:= \max\{W_0, U_0\} \equiv U_0$ and $\mathcal{Z}_j := \max\{\mathcal{W}_j, \mathcal{U}_j\}$, with $j=1,2,3$, satisfies
\[
W_k \leq Z_k \quad \text{and} \quad U_k \leq Z_k, \qquad \forall k \geq 0 \mx{.}
\]
\begin{rem}\label{rem:zk}
As it is clear from (\ref{eq:uuno}-\ref{eq:utre}), $\mathcal{Z}_j \equiv \mathcal{U}_j$ for all $j=1,2,3$.
\end{rem}
Let us now split $\sum_{j=0}^{k-1} Z_j = Z_0 + \sum_{j=1}^{k-1} Z_j$ so that the recurrence equation becomes:
\[
Z_k = \mathcal{Z}_1 + \mathcal{Z}_2 Z_0 + \sum_{j=1}^{k-1} Z_j (\mathcal{Z}_2 + \mathcal{Z}_3 Z_{k-j}).
\]
As $ Z_{k-j} \geq 1 $, we can bound $\mathcal{Z}_2 + \mathcal{Z}_3 Z_{k-j} \leq (\mathcal{Z}_2 + \mathcal{Z}_3) Z_{k-j}$. Hence
\[
Z_k \leq \mathcal{Z}_1 + \mathcal{Z}_2 Z_0 + (\mathcal{Z}_2 + \mathcal{Z}_3) \sum_{j=1}^{k-1} Z_j Z_{k-j}.
\]
The next step consists in defining a further sequence, $\{\widetilde{Z}_k\}_{k\geq 0}$, majorising $\{Z_k\}$ by construction, whose solution could be explicitly determined. This is exactly the aim of the next
\begin{prop} 
\beq{eq:ab}
a := \mathcal{Z}_1 + \mathcal{Z}_2 Z_0 \quad ; \quad b := \mathcal{Z}_2 + \mathcal{Z}_3 
\eeq
and define
\beq{eq:ztildek}
\widetilde{Z}_k := a + b \sum_{j=1}^{k-1} \widetilde{Z}_j \widetilde{Z}_{k-j}, \quad k \geq 1.
\eeq
with $\widetilde{Z}_0 := Z_0$. Then, for all $k \geq 1$, 
\beq{eq:evolutionzk}
U_k,W_k \leq \widetilde{Z}_k = \frac{a}{k} \binom{2(k-1)}{k-1} (a b)^{k-1}.
\eeq
\end{prop}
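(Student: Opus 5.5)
The plan has two parts: first the majorisation $U_k,W_k\le\widetilde{Z}_k$ by induction, then the closed form via a generating function and the Catalan numbers. I flag in advance that the closed form only holds when the constant $a$ enters the recursion at $k=1$ alone, and that is the main obstacle.

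\emph{Majorisation.} From the discussion preceding the statement we already have $U_k,W_k\le Z_k$ for all $k$. We also have the one-step inequality
\[
Z_k\le a+b\sum_{j=1}^{k-1}Z_jZ_{k-j},
\]
obtained using $Z_{k-j}\ge 1$. All the quantities involved are nonnegative. Hence a strong induction on $k$ gives $Z_k\le \widetilde{Z}_k$: the base case is $Z_1=\mathcal{Z}_1+\mathcal{Z}_2Z_0=a=\widetilde{Z}_1$, and the inductive step holds because the right-hand side is monotone in each $Z_j$ with $j<k$. This settles the inequality part of (\ref{eq:evolutionzk}).

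\emph{Closed form.} I would introduce the generating function $F(x):=\sum_{k\ge1}\widetilde{Z}_k x^k$. The convolution $\sum_{j=1}^{k-1}\widetilde{Z}_j\widetilde{Z}_{k-j}$ is the $k$-th coefficient of $F^2$. If $a$ enters only at $k=1$, i.e. $\widetilde{Z}_1=a$ and $\widetilde{Z}_k=b\sum_{j=1}^{k-1}\widetilde{Z}_j\widetilde{Z}_{k-j}$ for $k\ge2$, then
\[
F=ax+bF^2 .
\]
Choosing the root that vanishes at $x=0$ gives $F(x)=\bigl(1-\sqrt{1-4abx}\bigr)/(2b)$. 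Expanding via $\sqrt{1-4t}=1-2\sum_{k\ge1}\frac{1}{k}\binom{2(k-1)}{k-1}t^k$ yields
\[
\widetilde{Z}_k=\frac{1}{b}\cdot\frac1k\binom{2(k-1)}{k-1}(ab)^k=\frac ak\binom{2(k-1)}{k-1}(ab)^{k-1},
\]
which is exactly (\ref{eq:evolutionzk}). Equivalently, one can check directly by induction that $\widetilde{Z}_k=a^kb^{k-1}\mathrm{Cat}_{k-1}$, using the Catalan convolution $\mathrm{Cat}_{k-1}=\sum_{j=1}^{k-1}\mathrm{Cat}_{j-1}\mathrm{Cat}_{k-1-j}$.

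\emph{Main obstacle.} With the additive constant $a$ present at every order, as (\ref{eq:ztildek}) literally reads, the closed form fails already at $k=2$: $\widetilde{Z}_2=a+a^2b\ne a^2b$. So the key step is to justify dropping $a$ for $k\ge2$ in the majorant. I would first try to absorb it into the quadratic term using the inequality
\[
\sum_{j=1}^{k-1}Z_jZ_{k-j}\ge Z_1Z_{k-1}\ge a ,
\]
which needs $Z_j\ge1$ and $a\ge1$; both hold here, since $Z_0=1$ and $\mathcal{Z}_1\ge 6$. This gives $Z_k\le (b+1)\sum_{j=1}^{k-1} Z_jZ_{k-j}$ for $k\ge2$. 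Running the generating-function argument with $b$ replaced by $b':=b+1$ then yields the stated formula with $b'$ in place of $b$. If the exact constant $b$ is required, I would reinterpret (\ref{eq:ztildek}) as seeding $a$ only at $k=1$, and record the replacement $b\mapsto b+1$ as a harmless $O(1)$ change in the constants used later for $K_{\text{opt}}$.
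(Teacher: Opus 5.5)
Your diagnosis is right, and the defect lies in the statement and in the paper's proof, not in your argument. As (\ref{eq:ztildek}) is written, $\widetilde Z_2=a+a^2b\neq a^2b$. The paper's own numerical table confirms this: with $a=3$, $b=4$ it lists $\widetilde Z_2=39$ and $\widetilde Z_3=939$, whereas the closed form gives $36$ and $864$.

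The paper does set up the correct functional equation $\mathcal{G}(z)=az(1-z)^{-1}+b\mathcal{G}^2(z)$ for the literal recursion. The ``straightforward manipulation'' it then invokes is wrong. Expanding in $x=abz/(1-z)$ actually gives $\widetilde Z_k=a\sum_{n=1}^{k}\binom{k-1}{n-1}\frac1n\binom{2(n-1)}{n-1}(ab)^{n-1}$, and the claimed formula is only the term $n=k$.

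Your majorisation induction $Z_k\le\widetilde Z_k$ is the step the paper leaves implicit, and it is fine. Your closed-form step departs from the paper exactly where the paper fails. You seed $a$ only at $k=1$, so $F=ax+bF^2$ genuinely produces the Catalan formula. The absorption $a=Z_1\le Z_1Z_{k-1}\le\sum_{j=1}^{k-1}Z_jZ_{k-j}$ for $k\ge2$ (using $Z_{k-1}\ge1$) legitimately justifies this, at the price $b\mapsto b+1$. That gives a correct version of (\ref{eq:evolutionzk}) with $b+1$, which is enough for everything downstream.

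One caveat concerns your fallback. Simply reinterpreting (\ref{eq:ztildek}) with $a$ seeded only at $k=1$ does not by itself give $U_k,W_k\le\frac ak\binom{2(k-1)}{k-1}(ab)^{k-1}$. The reason is that $Z_k\le a+b\sum_{j}Z_jZ_{k-j}$ does not imply $Z_k\le b\sum_{j}Z_jZ_{k-j}$.

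The exact $b$ can nevertheless be kept. Skip the lossy replacement $\mathcal{Z}_2+\mathcal{Z}_3Z_{k-j}\le(\mathcal{Z}_2+\mathcal{Z}_3)Z_{k-j}$ and induct directly on the exact identity $Z_k=a+\mathcal{Z}_2\sum_{j=1}^{k-1}Z_j+\mathcal{Z}_3\sum_{j=1}^{k-1}Z_jZ_{k-j}$. Compare against $c_k:=\frac ak\binom{2(k-1)}{k-1}(ab)^{k-1}$, which satisfies $c_1=a$ and $c_k=b\sum_{j=1}^{k-1}c_jc_{k-j}$.

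With $S:=\sum_{j=1}^{k-1}c_j$, the inductive step reduces to $a+\mathcal{Z}_2S\le\mathcal{Z}_2\sum_{j=1}^{k-1}c_jc_{k-j}$. Since $c_m\ge a$ (as $ab\ge1$), the right-hand side is at least $\mathcal{Z}_2aS$. So it suffices that $\mathcal{Z}_2(a-1)S\ge a$. This follows from $S\ge a$ and $\mathcal{Z}_2(a-1)\ge1$, which holds here since $\mathcal{Z}_2=\mathcal{U}_2\ge6$ and $a\ge6$.

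So the inequality in (\ref{eq:evolutionzk}) is true with the original $b$. Only the asserted equality $\widetilde Z_k=\frac ak\binom{2(k-1)}{k-1}(ab)^{k-1}$ is false.
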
 
\proof Uses a straightforward generalisation of the well known Catalan numbers sequence. We give here a brief outline of it for the sake of completeness. By using the so-called generating function method, we proceed by multiplying the $k-$th equation of (\ref{eq:ztildek}) by $z^k$ then summing both left and right hand sides. Hence, by defining $\mathcal{G}(z) := \sum_{k=1}^\infty \widetilde{Z}_k z^k$, we find $\mathcal{G}(z) = \sum_{k=1}^\infty a z^k + b \sum_{k=1}^\infty z^k \sum_{j=1}^{k-1} \widetilde{Z}_j \widetilde{Z}_{k-j} $. Subsequently, by computing the first sum and recognising a product \emph{à la Cauchy} in the second one, we get  
\[
\mathcal{G}(z) = a (1-z)^{-1} z + b \mathcal{G}^2(z).
\]
Solving with respect to $\mathcal{G}(z)$, we get
\[
\ml{G}(z) = (2b)^{-1}\left[1 - \sqrt{1 - (1-z)^{-1}(4 a b z)}\right].
\]
The task now consists in expanding the r.h.s. in powers of $z$. For this purpose, let us recall the formula $2^{-1}(1-\sqrt{1-4x})=\sum_{n=0}^{\infty}(1+n)^{-1}\binom{2n}{n}x^{n-1}$, where we set $x:=ab(1-z)^{-1}z$, then expand the latter in powers of $z$. A straightforward manipulation yields (\ref{eq:evolutionzk}). 
\endproof

\begin{ex}[Numerical validation of (\ref{eq:evolutionzk})] Let us choose 
\[
\mathcal{W}_1=1,\quad \mathcal{W}_2=2,\quad \mathcal{W}_3=1, \quad \mathcal{U}_1=1,\quad \mathcal{U}_2=1,\quad \mathcal{U}_3=2.
\]
Thus $a=3$ and $b=4$. The table below shows the first terms of $\{U_k\}$, $\{W_k\}$ and how majorising property $\widetilde{Z}_k$ holds:
\begin{table}[h!]
\centering
\begin{tabular}{|c|c|c|c|}
\hline
$k$ & $U_k$ & $W_k$ & $\widetilde{Z}_k$ \\
\hline
0 & 1 & 0 & 1 \\
1 & 2 & 3 & 3 \\
2 & 16 & 13 & 39 \\
3 & 168 & 113 & 939 \\
4 & 2064 & 1313 & 28623 \\
5 & 27840 & 17953 & 1043649 \\
6 & 408864 & 266753 & 44272779 \\
7 & 6423936 & 4191809 & 2077497615 \\
8 & 107487168 & 70226401 & 107996103879 \\
9 & 1909610496 & 1241897857 & 6198003389695 \\
\hline
\end{tabular}
\caption{Comparison of the sequences $U_k$, $W_k$, and $\widetilde{Z}_k$ for $k = 0, \ldots, 9$}
\end{table}
\end{ex}
\section{Exponential Bound for \texorpdfstring{$a$}{a} and \texorpdfstring{$b$}{b}}
The next step is concerned with the fact that the objects $a$ and $b$ do depend on $K$. Our aim is to give a bound on such a dependence. Recalling (\ref{eq:ab}) and Rem. \ref{rem:zk}, let us write down the following expressions
\begin{align*}
\mathcal{U}_1 &= \frac{384 K^2 e^{-2}}{\sigma^2} + \frac{144 K e^{-1}}{\sigma} + 54 + 12 \Gamma, \\
\mathcal{U}_2 &= 3 \mathcal{M} \left( \frac{\rho}{\sigma} + \mathcal{M}(K+1) \right) \left( \frac{128 K^2 e^{-2}}{\sigma^2} + \frac{48 K e^{-1}}{\sigma} + 16 \right) \\
&\quad + 3 \mathcal{M} \left( \frac{3456 K^3 e^{-3}}{\sigma^3} + \frac{768 K^2 e^{-2}}{\sigma^2} + \frac{64 K e^{-1}}{\sigma} \right) + 6 \mathcal{M} \left( 1 + \frac{\sigma}{\rho} + \mathcal{M}(K+1) \right), \\
\mathcal{U}_3 &= \frac{12 K}{e \sigma} + 30.
\end{align*}
Let be  
\beq{eq:alphadef}
\alpha:=4/(e \rho) \mx{,}
\eeq
in such a way $ \mathcal{M} = e^{\alpha K} $. We can overestimate the expressions for $ a $ and $ b $ by carrying out the factor $ \mathcal{M}^2 = e^{2\alpha K} $ and setting $ \mathcal{M} = 1 $ inside the polynomials. This gives:
\beq{eq:abdef}
a \leq \mathcal{M}^2  P_a(K), \qquad b \leq \mathcal{M}^2 P_b(K),
\eeq
where the polynomials $\mathcal{P}_a(K)$ and $\mathcal{P}_b(K)$ are written in terms of powers of $K$ as follows 
\begin{align*}
\ml{P}_a(K) &= A_3 K^3 + A_2 K^2 + A_1 K + A_0, \\
\ml{P}_b(K) &= B_3 K^3 + B_2 K^2 + B_1 K + B_0.
\end{align*}
where
\begin{align*}
A_0 &= (2 \rho \sigma)^{-1} [24 \Gamma \rho \sigma + 24 \rho^2 + 30 \rho \sigma + 3 \sigma^2 + 108 \rho \sigma], \\
A_1 &= \left(2 e \sigma^2\right)^{-1} \left[27 e \sigma^2 + 72 \rho + 168 \sigma + 288 \sigma\right], \\
A_2 &= \left(e^2 \sigma^3\right)^{-1} \left[36 e \sigma^2 + 96 \rho + 672 \sigma + 384 \sigma\right], \\
A_3 &= \left(e^3 \sigma^3\right)^{-1} \left[96 e \sigma + 2592\right],
\end{align*}
and
\begin{align*}
B_0 &= \left(\rho \sigma\right)^{-1} \left[48 \rho^2 + 60 \rho \sigma + 6 \sigma^2 + 30 \rho \sigma\right], \\
B_1 &= \left(e \sigma^2\right)^{-1} \left[54 e \sigma^2 + 144 \rho + 336 \sigma + 12 \sigma\right], \\
B_2 &= \left(e^2 \sigma^3\right)^{-1} \left[144 e \sigma^2 + 384 \rho + 2688 \sigma\right], \\
B_3 &= \left(e^3 \sigma^3\right)^{-1} \left[384 e \sigma + 10368\right].
\end{align*}
\begin{prop}
Let 
\[
f(x) := e^{\alpha x}(c_0 + c_1 x + c_2 x^2 + c_3 x^3) \mx{,}
\]
with $ \alpha > 0 $ and real positive constants $ c_0, c_1, c_2, c_3 $. Then, for all $ x \geq 1 $, the following estimate holds:
\beq{eq:propexp}
f(x) \leq \mathscr{B} e^{2\alpha x}, \qquad \mathscr{B} := \left[ 3/(e \alpha) \right]^3 \sum_{j=0}^3 c_j .
\eeq
\end{prop}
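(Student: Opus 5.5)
The plan is to divide through by $e^{2\alpha x}$ and reduce (\ref{eq:propexp}) to the elementary one-variable inequality already used in (\ref{eq:expinequality}) and in the proof of Prop. \ref{prop:inequalities}. Since $e^{2\alpha x}>0$, the claim $f(x)\le \mathscr{B}e^{2\alpha x}$ is equivalent to
\[
\sum_{j=0}^{3} c_j\, x^j e^{-\alpha x} \le \left(\frac{3}{e\alpha}\right)^3 \sum_{j=0}^3 c_j, \qquad x\ge 1 .
\]
Because all $c_j>0$, it suffices to prove the term-by-term bound $x^j e^{-\alpha x}\le (3/(e\alpha))^3$ for each $j=0,1,2,3$ and all $x\ge 1$, and then sum with weights $c_j$.

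First I use the restriction $x\ge 1$ to get monotonicity in the exponent: for $x\ge1$ and $0\le j\le 3$ we have $x^j\le x^3$. Hence $x^j e^{-\alpha x}\le x^3 e^{-\alpha x}$, and the four cases reduce to the single case $j=3$. Next I maximise $\varphi(x):=x^3e^{-\alpha x}$ over $x>0$, exactly as in the proof of Prop. \ref{prop:inequalities} (the inequality $n^q e^{-an}\le (q/a)^q e^{-q}$ with $q=3$, $a=\alpha$). The derivative $\varphi'(x)=x^2e^{-\alpha x}(3-\alpha x)$ vanishes only at $x^*=3/\alpha$, which is a global maximum. This gives $\varphi(x)\le (3/\alpha)^3e^{-3}=(3/(e\alpha))^3$ for every $x>0$, and in particular for $x\ge1$. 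Multiplying the term-by-term bound by $c_j$, summing over $j$, and multiplying back by $e^{2\alpha x}$ yields (\ref{eq:propexp}).

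I expect the only delicate step to be the case of small $j$ (notably $j=0$) when $\alpha$ is large. In that regime $(3/(e\alpha))^3<1$, so a naive bound of each $x^je^{-\alpha x}$ by its own maximum $(j/(e\alpha))^j$ would fail, since it gives $1$ for $j=0$. The reduction $x^j\le x^3$ for $x\ge1$ is exactly what avoids this, which is why the hypothesis $x\ge1$ is needed. As a sanity check for $j=0$, the bound requires $e^{\alpha}\ge (e\alpha/3)^3$. Setting $t=\alpha/3$, this reads $e^{t}\ge et$, i.e. $e^{t-1}\ge t$, which is true for all $t>0$.
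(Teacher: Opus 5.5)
Your proposal is correct and matches the paper's proof essentially step for step: both use $x\ge 1$ to bound $c_0+c_1x+c_2x^2+c_3x^3\le \bigl(\sum_{j} c_j\bigr)x^3$, and both then bound $x^3e^{-\alpha x}$ by its global maximum $(3/(e\alpha))^3$, attained at $x=3/\alpha$. Your closing check of the $j=0$ case is consistent but not needed, since it already follows from the reduction $x^j\le x^3$.
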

\proof
Let us write $f(x)=: e^{\alpha x} \mathcal{P}(x)$. We now aim to bound $ f(x) $ from above by a pure exponential $ e^{\beta x} $ with $ \beta > \alpha $. Choosing $ \beta := 2\alpha $, we have 
$f(x) = e^{2\alpha x} \cdot e^{-\alpha x} \mathcal{P}(x)$, so it is sufficient to bound $e^{-\alpha x} \mathcal{P}(x)$. \\
First of all, as $x \geq 1$, then $\ml{P}(x) \leq (c_0 + c_1 + c_2 + c_3) x^3$. Hence, the analysis is reduced to bound the maximum of the function $ x \mapsto e^{-\alpha x} x^3 $. A straightforward computation shows that this function attains its maximum at $ x = 3/\alpha $, so it is sufficient to substitute this value to obtain bound (\ref{eq:propexp})
\endproof
By the latter and recalling (\ref{eq:abdef}), we have 
\beq{eq:mathscrb}
a,b \leq \mathscr{B} e^{2 \alpha K}, \qquad \mathscr{B}:=(27/8) \Theta \rho^3 \mx{.}
\eeq
where
\beq{eq:capitaltheta}
\begin{aligned}
\Theta & :=\max\left\{ \sum_{j=0}^3 A_j,\ \sum_{j=0}^3 B_j \right\} 
& = 144 +
12 \Gamma + \frac{48\rho}{\sigma} + \frac{6\sigma}{\rho}
+ \frac{144\rho}{e\sigma^2} + \frac{492}{e\sigma}
+ \frac{384\rho}{e^2\sigma^3} + \frac{3072}{e^2\sigma^2}
+ \frac{10368}{e^3\sigma^3}.
\end{aligned}
\eeq

\section{Remainder estimate}
\subsection{Resolvability defect}
Let us recall (\ref{eq:remainder}). By (\ref{eq:evolutionzk}), we have
\begin{align*}
\norm{\mathcal{R}^{[k]}}{(\rho_k,\sigma_k)} & \leq \sum_{n = k+1}^{2k} \mu^n \sum_{j = n-k}^{k} \widetilde{Z}_{n-j} \widetilde{Z}_j \\
& \leq \sum_{n = k+1}^{2k} \mu^n \sum_{j = n-k}^{k}  \frac{a^2}{j(n - j)} \binom{2(j - 1)}{j - 1} \binom{2(n - j - 1)}{n - j - 1} (ab)^{n - 2} \\
& \leq a^2 \sum_{n = k+1}^{2k} \mu^n (2 e a b)^{n-2}\sum_{j = n-k}^{k}  \frac{1}{j(n - j)} \\
& \leq  \sum_{n = k+1}^{+\infty}  (2 e \mu a b)^{n}\\
& \leq (2 e \mu a b)^{k} \mx{,}
\end{align*}
where we have used the Stirling-type bound $\binom{2m}{m} \leq (2e)^m$ to get the third estimate, the inequality $\sum_{j = 1}^{n - 1} \frac{1}{j(n - j)} \leq 1$ to get the fourth one, which can be shown by observing that $f_n(x):=1/[x(n-x)] \leq 1/(n-1)$. Finally, we have used that $2 e b \geq 1$ and the fact that 
\beq{eq:muzero}
2 e \mu a b < 1 \mx{,}
\eeq
which we shall suppose to be true for the moment.\\
We are now interested in bounding the expression
\beq{eq:fk}
F(K) := (2 e a b \mu)^K,
\eeq
where $ a, b \leq \mathscr{B} e^{2\alpha K} $ by (\ref{eq:mathscrb}). Clearly, showing that $F(K) < 1$, implies (\ref{eq:muzero}). By substituting the latter into  $ F(K) $, we have
\[
F(K) \leq \left( 2 e \mathscr{B}^2 \mu \right)^K e^{4\alpha K^2}=:G(K).
\]
The last step consists in minimising this bound with respect to $K$: the minimising $K$ shall be called $K_*$ which is going to be, however, a real number (at least in general). Hence, we shall finalise our choice of $K$ as $K \equiv K_{\text{opt}}$, which is nothing but the lowest integer greater than $K_*$.\\
First of all, we have $
\log G(K) = K \log \big( 2 e \mathscr{B}^2 \mu \big) + 4\alpha K^2$. A derivation w.r.t. $K$ shows that its stationary point $K_*$ is given by 
\beq{eq:kstar}
K_* = -\frac{\log \big( 2 e \mathscr{B}^2 \mu \big)}{8\alpha} \quad \Rightarrow \quad
\log G(K_*) = -\frac{\big[\log( 2 e \mathscr{B}^2 \mu )\big]^2}{16\alpha}.
\eeq
Let us observe that $\log G(K)$ is convex, so $G(K)$ is, hence
\beq{eq:ineqg}
G(K_*) \le G\big(\lceil K_* \rceil\big) \le G(K_*+1).
\eeq
Moreover $\log G(K_*+1) - \log G(K_*) = 4\alpha$, so $G(K_*+1) = e^{4\alpha} G(K_*)$. By using  (\ref{eq:kstar}) in the latter, setting
\beq{eq:kopt}
K_{\text{opt}}:=\lceil K_* \rceil
\eeq
using (\ref{eq:ineqg}), (\ref{eq:fk}) and finally recalling (\ref{eq:alphadef}), one finds
\beq{eq:prefinal}
F(K_{\text{opt}}) \leq G\big( K_{\text{opt}} \big) \leq e^{\frac{16}{e \rho} - \frac{e \rho}{64}\left[\log \big( 2 e \mathscr{B}^2 \mu \big)\right]^2} \mx{.}
\eeq
Hence, $\norm{\mathcal{R}^{[k]}}{(\rho_k,\sigma_k)}$ evaluated at $k=K_{\text{opt}}$, where one sets 
\beq{eq:mathscrc}
\mathscr{C}_0:=\exp(16/(e \rho)), \qquad \mathscr{C}_2:=e \rho/64, \qquad \mathscr{C}_3:=2 e \mathscr{B}^2 \mx{,}
\eeq
is bounded by the r.h.s. of (\ref{eq:prefinal}) in the form
\beq{eq:prefinalone}
\mathscr{C}_0 e^{-\mathscr{C}_2 [\log(\mathscr{C}_3 \mu)]^2}=\mathscr{C}_1 
e^{-\mathscr{C}_2 (\log \mathscr{C}_3)^2} 
e^{-2\mathscr{C}_2 (\log \mathscr{C}_3)(\log \mu)} 
e^{-\mathscr{C}_2 (\log \mu)^2} 
\leq \mathscr{C}_1\mu^{\mathscr{C}_2(\log(1/\mu)-2\log \mathscr{C}_3)} \mx{,}
\eeq
where 
\beq{eq:mathscrcd}
\mathscr{C}_1=\mathscr{C}_0\mathscr{C}_3^{-\mathscr{C}_2\log \mathscr{C}_3} \mx{,}
\eeq
which is exactly the first of (\ref{eq:finalbound}).\\
We are now ready to provide the smallness condition of $\mu$. On one side, in order to get that the exponent in (\ref{eq:finalbound}) is always positive for all $\mu \leq \mu_0$ and the r.h.s. infinitesimal we need to ensure $\mu \leq \mathscr{C}_3^{-2}$. On the other hand, to 
obtain $F(K_{opt})<1$, and hence (\ref{eq:muzero}) satisfied, it is necessary that $\mu < \mathscr{C}_3^{-1} \exp(-32e^{-1}\rho^{-1})$, see (\ref{eq:prefinal}). In conclusion, the smallness condition on $\mu$ is given by
\beq{eq:muzeronew}
\mu_0:=\mathscr{C}_3^{-1}\min\{\mathscr{C}_3^{-1},  2^{-1}\exp(-32 e^{-1}\rho^{-1})\}\mx{.}
\eeq  
The final choice for the constructed (family of) solution(s) will be
\beq{eq:finalvw}
\left(u^*, w^*\right):=\left(u^{[k]},w^{[k]}\right)_{k \equiv K_{\text{opt}}} \mx{.}
\eeq
\subsection{Dirichlet boundary defect}
Let us recall (\ref{eq:dirichletdefect}) and assumption (\ref{eq:g}). By proceeding as in (\ref{eq:dirichletboundone}), after having recalled that 
\beq{eq:normmonotony}
\norm{\cdot}{(\rho_k,\sigma_k)} \leq \norm{\cdot}{(\rho_{k-\frac12},\sigma_{k-\frac12})} \mx{,}
\eeq
by monotony, we obtain 
\begin{align*}
\norm{\mathcal{E}_{D}^{[k]}}{(\rho_k,\sigma_k)} & \le
\sum_{j=k+1}^{\infty}\mu^{j}\sum_{m=j-k}^{j}
\frac{\norm{\partial_r^{m}u_{j-m}}{(\rho_k,\sigma_k)}}{m!} \\
& \leq \sum_{j=k+1}^{\infty}\mu^{j}\sum_{m=j-k}^{j}
\left(\frac{4K}{\rho m}\right)^{m} U_{j-m} \\
&  \leq 2 
\mathcal{M} \mu^{k+1} \sum_{\ell=0}^{k}U_{\ell}.
\end{align*}
The latter has been obtained by introducing $\ml{M}$ as in (\ref{eq:m}), observing that $\sum_{m=j-k}^{j}U_{j-m}=\sum_{\ell=0}^{k}U_{\ell}$ i.e. independent of $j$, then computing $\sum_{j=k+1}^{\infty}\mu^{j}
=(1-\mu)^{-1} \mu^{k+1}\le 2 \mu^{k+1}$ as $\mu \leq 1/2$ by assumption.\\
Now, recalling (\ref{eq:recurrencesystem}), using $\ml{U}_2>1$ then (\ref{eq:evolutionzk}) with $\binom{2(\ell-1)}{\ell-1}\le(2e)^{\ell-1}$ and finally $2 e b \geq 1$, we have
\beq{eq:boundsumul}
\sum_{\ell=0}^{k}U_{\ell} = U_k+ \sum_{\ell=0}^{k-1}U_{\ell} \leq 2 U_k \leq \frac{2a}{k} \binom{2(k-1)}{k-1} (a b)^{k-1} \leq 2 a (2 e a b)^{k-1} \leq 2 (2 e a b)^k \mx{.}
\eeq
Hence, we have by (\ref{eq:fk})
\beq{eq:firstbounded}
\norm{\mathcal{E}_{D}^{[k]}}{(\rho_k,\sigma_k)} \leq 4 \ml{M} \mu F(k) \mx{.}
\eeq
Let us now observe that
\beq{eq:mkstar}
\mathcal{M}(\lceil K_*\rceil) \le \mathcal{M}(K_*+1)
= e^{\alpha(K_*+1)} 
= e^{\alpha}\exp\!\Bigl(\tfrac{1}{8}\log\tfrac{1}{2 e \mathscr{B}^2 \mu}\Bigr)
= e^{\alpha}\Bigl(\tfrac{1}{2 e \mathscr{B}^2 \mu}\Bigr)^{1/8} \le e^{\alpha}\mu^{-1/8} \mx{,}
\eeq
as $2 e \mathscr{B}^2 \ge 1$. Hence, the r.h.s. of (\ref{eq:firstbounded}), evaluated at $k=K \equiv K_{\text{opt}}$, is bounded by $4 \mathcal{M} \mu F(K_{\text{opt}})$. \\
Now it is sufficient to use (\ref{eq:prefinal}) and (\ref{eq:prefinalone}) combined, then define 
\beq{eq:cfour}
\mathscr{C}_4:=4 e^{\frac{4}{\rho \sigma}} \mathscr{C}_3^{-\frac{7}{16}} \mathscr{C}_1 \mx{,}
\eeq
where we have used (\ref{eq:muzero}), to get the second of (\ref{eq:finalbound}).
\subsection{Neumann boundary defect}
As for $\mathcal{E}_{D}^{[k]}(\theta)$, the bounds go along the lines of those carried out in Sec. \ref{subsec:nuemannboundary} as the terms in $\Psi^{(k)}$ differ from those appearing in (\ref{eq:neumanndefect}) for the upper extremum of summation only. As a consequence, similarly to (\ref{eq:neumannnotation}), we denote $\mathcal{E}_{N}^{[k]}(\theta)=\sum_{j=k+1}^{\infty} \mu^j \{(\widetilde{I})+(\widetilde{II})+ \ldots + (\widetilde{V})\}$ where the terms are deduced immediately from (\ref{eq:neumanndefect}). \\
Proceeding as in Sec. \ref{subsec:nuemannboundary}, and using \eqref{eq:normmonotony} once more, we get
\begin{align}
\norm{(\widetilde{I})}{(\rho_{k},\sigma_{k})} & \leq \frac{\rho}{\sigma} \mathcal{M} G  \sum_{h=0}^{k} U_h \mx{,}\\
\norm{(\widetilde{II})}{(\rho_{k}, \sigma_{k})} & \leq  \mathcal{M}^2 G (K+1)
\sum_{h=0}^{k} U_h \mx{,} \\
\norm{(\widetilde{III})}{(\rho_{k}, \sigma_{k})} & \leq 
2K \mathcal{M} G 
\sum_{h=0}^{k} U_h \mx{,} \\
\norm{(\widetilde{IV})}{(\rho_{k}, \sigma_{k})} & \leq K \mathcal{M} G 
\sum_{h=0}^{j-2} U_h \leq K \mathcal{M} G 
\sum_{h=0}^{k} U_h \mx{,} \\
\|(\widetilde{V})\|_{(\rho_{k},\sigma_{k})}
& \leq 1 \mx{.} 
\end{align}
Hence, by collecting all the estimates, we have
\begin{align*}
\norm{\mathcal{E}_{N}^{[k]}}{(\rho_{k},\sigma_{k})} & \leq 
\left( \mathcal{M} G \left[\rho/\sigma + \mathcal{M}(K+1) + 3 K \right] \sum_{h=0}^{k} U_h\right) \left( \sum_{j=k+1}^{\infty} \mu^j \right) \\
& \leq \mu \mathcal{M} [\rho/\sigma+ 4 \mathcal{M}(K+1)] F(k) \mx{,}
\end{align*}
where we have used \eqref{eq:g}, \eqref{eq:boundsumul}, $\mathcal{M} \geq 1$, then $\sum_{j \geq k+1} \mu^j \leq 2 \mu^{k+1}$ for all $\mu \leq 1/2$ and finally the definition of $F(k)$ given in (\ref{eq:fk}). We proceed with the following bound
\begin{align*}
\mu\mathcal{M}\!\left[\rho/\sigma+4\mathcal{M}(K_{\mathrm{opt}}+1)\right]
&\le
\mu\left(
(\rho/\sigma)e^{\alpha}\mu^{-1/8}
+4e^{2\alpha}\mu^{-1/4}
\left(
2-(8\alpha)^{-1}\log\!\bigl(2e\mathscr{B}^2\mu\bigr)
\right)
\right)\\
&\le
e^{\frac{8}{\rho\sigma}}
\left(
(\rho/\sigma)\mu^{7/8}
+8\mu^{3/4}
+(e\rho/2)\mu^{1/2}
\right)\\
&\le
e^{\frac{8}{\rho\sigma}}
\left(\rho/\sigma+8+e\rho/2\right)\mu_0^{1/2}\\
&=
e^{\frac{8}{\rho\sigma}}
\left(\rho/\sigma+8+e\rho/2\right)\mathscr{C}_3^{-1}.
\end{align*}
Where we have used (\ref{eq:mkstar}), the inequalities $(K_{\text{opt}}+1) \leq (K_*+2)$ and $\mu^{\gamma}\log(1/(A\mu)) \leq 1/(e \gamma A)$, for all $\gamma,A >0$ (which can be easily shown by computing the maximum of the l.h.s. in $\mu$), then (\ref{eq:muzeronew}), the property $2 e \mathscr{B}^2 \ge 1$ and other immediate numerical bounds.
\[
\norm{\mathcal{E}_{N}^{[k]}}{(\rho_k,\sigma_k)}
\le e^{\frac{8}{\rho\sigma}}
\left(\rho/\sigma+8+e\rho/2\right)\mathscr{C}_3^{-1}
F(k) \mx{.} 
\]
By evaluating the latter in $k=K \equiv K_{\text{opt}}$, using once more the combination of (\ref{eq:prefinal}) and (\ref{eq:prefinalone}), then finally defining \beq{eq:cfive}
\mathscr{C}_5:=e^{\frac{8}{\rho\sigma}}
\left(\rho/\sigma+8+e\rho/2\right) \mathscr{C}_3^{-1} \mathscr{C}_1 \mx{,}
\eeq
the last bound of (\ref{eq:finalbound}) is proven. The proof is now complete. 
\clearpage
\section{A paradigmatic example}\label{sec:example}
The aim of this final section is to show how the formal algorithm presented in Sec. \ref{sec:formal} and summarised in pseudocode form in the Appendix, can be used in a concrete case to explicitly compute the approximants. For this reason, the computations will be carried out a bit more into detail for the reader's convenience and along the lines of the Appendix, to which we shall refer for all the formulae used at the various stages of the algorithm. \\  
For this purpose, let us consider the particularly simple perturbation function
\beq{eq:gex}
g(\theta) = (1/20) \cos( 4 \theta) \mx{.}
\eeq
\begin{rem} To complete the discussion started in Rem. \ref{rem:sigma} and continued in Rem. \ref{rem:rho}, we sketch how a particular choice of $g$ determines $\sigma$, completing in this way the pair of analyticity radii the quantitative bounds depend on. \\
We have $g'=-1/5 \sin (4 \theta)$, so that we find in this case
\[
\norm{g}{(\rho,\sigma)} < 
\norm{g'}{(\rho,\sigma)} = 1/5 \cosh(4 \sigma) \mx{.}
\]
Hence, requiring $\norm{g'}{(\rho,\sigma)} \leq 1/4$ is equivalent to
\[
\sigma \leq (1/4) \log 2 \sim 0.1733. 
\]
The latter unlocks the full determination of all the constants appearing in Thm. \ref{thm:main}, as well as the optimal normalisation order $K_{opt}$.  
\end{rem}
\subsection*{First step, $k=1$:}
\subsubsection*{Boundary conditions}
As $\mathcal{F}^{(1)} \equiv 0$ by construction, we shall start from the Dirichlet boundary condition. From  (\ref{eq:dirichlet}) we get that it reads as\[
u_1 (1,\theta) = - g(\theta) \partial_r u_0(r)=(1/40) \cos (4 \theta) \mx{,}
\]
so that the corresponding non-vanishing Fourier coefficients are reduced to $\widetilde{A}_4^{(1)} = 1/40$ only.\\
On the other hand, Neumann's first-order condition yields,
\[
\pl_r u_1 (1,\theta) = (1/2)g(\theta)=(1/40) \cos (4 \theta) \mx{,}
\]
implying that the only non-zero coefficient would be $\widetilde{A}_4^{(1)\prime}=1/40$.

\subsubsection*{Construction of $w_1$}
By choosing $M=0$, the construction of the sequence $\left\{ a_{n,0}^{(1)} \right\}_{n \geq 0} $ is reduced to 
\[
a_{4,0}^{(1)} = 2 (2+4)(4+4) \left(\widetilde{A}_4^{(1)\prime} - 4\widetilde{4}_2^{(1)}\right) = -36/5,
\]
while clearly $\left\{ b_{n,0}^{(1)} \right\}_{n \geq 1}$ vanish identically. By (\ref{eq:anseries}), one has $a_{4}^{(1)}=-36/5$, hence (\ref{eq:wkfourier}) readily yields
\beq{eq:examplew1}
w_1 = -(36/5) \cos (4 \theta).
\eeq
\subsubsection*{Construction of $u_1$}
First of all, by (\ref{eq:cndn}), 
\[
C_4^{(1)} = 2^{-1} \left( \widetilde{A}_4^{(1)\prime} - 4^{-1}\widetilde{A}_4^{(1)} \right) = 1/64, 
\]
while $D_n^{(1)} \equiv 0$. On the other hand, in this case we simply have $\mathfrak{f}_4^{(1)}(r)=u_0(r)a_{2}^{(1)}=-(36/20)(1-r^2)$. In conclusion, by (\ref{eq:an}), 
\[
A_{4}^{(1)}(r) =
r^4 \left(
C_4^{(1)} - \frac{1}{8}\int_r^1 \mathfrak{f}_4^{(1)} (s) \cdot s^{-3} ds
\right)
-\frac{1}{8 r^4}\int_0^r \mathfrak{f}_4^{(1)} (s) \cdot s^5 ds = \frac{9 r^4 \log r}{40}-\frac{r^4}{8} + \frac{3 r^2}{20}.
\]
Hence,
\beq{eq:exampleu1}
u_1(r,\theta) = (r^2/40)(9 r^2 \log r - 5 r^2 +6) \cos( 4 \theta).
\eeq
\subsection*{Second step, $k=2$:}
\subsubsection*{Non-linear term}
In this case we have a non-trivial non-linear term, which reads as
\[
\mathcal{F}^{(2)}:=w_1 u_1 = -(36/200)r^2(9 r^2 \log r - 5 r^2 +6) \cos^2( 4 \theta).
\]
As $ \cos^2( 4\theta) = (1/2)(1 + \cos (8\theta))$, this non-linear term gives contributions to the modes $0$ and $8$, more precisely 
\[
\xi_0^{(2)}(r) = \xi_8^{(2)}(r) =
-(36/100)r^2(9 r^2 \log r - 5 r^2 +6) ,
\]
while all the $\eta_k$ vanish identically.
\subsubsection*{Boundary conditions}
Let us now evaluate
\[
\Phi^{(2)}:=\left[-g \pl_r u_1-(1/2) g^2 \pl_r^2 u_0 \right]_{r=1}=-(1/1600)\cos^2(4\theta) \mx{,}
\]
leading to the following Fourier coefficients
\[
\widetilde{A}_0^{(2)} = \widetilde{A}_8^{(2)} = -1/3200, 
\]
while all the $\widetilde{B}_n^{(2)}$ vanish. As for the Neumann condition, we can refer to Rem. \ref{rem:firstorders}, case $k=2$, and evaluate
\[
\Psi^{(2)}=\left[g' \pl_{\theta} u_1 - g \pl_{r}^2 u_1 - 2 g \pl_r u_1 +g^2 -(1/4)(g')^2 \right]_{r=1}=(1/100) \sin^2 (4 \theta) - (3/160) \cos^2 (4 \theta) \mx{,}
\]
which directly leads to
\[
\widetilde{A}_0^{(2)\prime}=-7/1600, \qquad \widetilde{A}_8^{(2)\prime}=-23/1600 \mx{.}
\]
\subsubsection*{Construction of $w_2$}
It is sufficient to compute 
\begin{align*}
a_{0,0}^{(2)} &= 16 \left[ \widetilde{A}_0^{(2)} - \int_{0}^1 \xi_0^{(2)}(s) \cdot s ds   \right]=\frac{53}{100}, \\
a_{8,0}^{(2)} &= 2(2+8)(4+8) \left[ \widetilde{A}_8^{(2)} - 8 \widetilde{A}_8^{(2)\prime} - \int_{0}^1 \xi_8^{(2)}(s) \cdot s^9 ds   \right]=-\frac{741}{980},
\end{align*}
so that    
\beq{eq:examplew2}
w_2=53/100 - (741/980) \cos ( 8 \theta) \mx{,}
\eeq
furthermore, $a_{0}^{(2)}=53/100$ and $a_{8}^{(2)}=741/980$ immediately follow.
\subsubsection*{Construction of $u_2$}
The only non vanishing term of the sequences $\left\{ C_n^{(k)} \right\}_{n \geq 1} $ and $\left\{ D_n^{(k)} \right\}_{n \geq 1} $ is
\[
C_8^{(2)}=2^{-1} \left(\widetilde{A}_8^{(2)} + 8^{-1}  \widetilde{A}_8^{(2)\prime} \right) = -27/25600 \mx{.}
\]
On the other hand, 
\begin{align*}
\mathfrak{f}_0^{(2)}(r) &=u_0 a_{0}^{(2)} + \xi_0^{(2)}=(53/400) (1-r^2) -(36/100)r^2(9 r^2 \log r - 5 r^2 +6),\\
\mathfrak{f}_8^{(2)}(r) &=u_0 a_{8}^{(2)} + \xi_8^{(2)}=(741/3920) (1-r^2) -(36/100)r^2(9 r^2 \log r - 5 r^2 +6),
\end{align*}
implying, in particular, $\mathfrak{F}_0^{(2)}(s):=\int_{0}^s \mathfrak{f}_0^{(2)}(s') ds' =1600^{-1}s^2 (216 s^4 \log s - 156 s^4 + 269 s^2 -106)$.  As a consequence,  
\[
A_0^{(2)}= \widetilde{A}_0^{(2)} - \int_{r}^1 s^{-1} \mathfrak{F}_0^{(2)}(s)  ds=
-\frac{73}{6400}
+\frac{53}{1600} r^2
-\frac{269}{6400} r^4
+\frac{1}{50} r^6
-\frac{9}{400} r^6 \log(r)
 \mx{,}
\]
and finally
\begin{align*}
A_8^{(2)} & =r^8 \left[C_8^{(2)} - \frac{1}{16} \int_r^1 \mathfrak{f}_8^{(2)}(s) \cdot s^{-7} ds - \frac{1}{16 r^8} \int_0^r \mathfrak{f}_8^{(2)}(s) \cdot s^9 ds \right]\\
&=\frac{247}{78400} r^2
+\frac{2293}{313600} r^4
-\frac{9}{2450} r^6
+\frac{81}{2800} r^6 \log(r)
-\frac{2227}{313600} r^8 \mx{.}
\end{align*}
The computations above give the $u_2$ we were looking for
\beq{eq:exampleu2}
\begin{aligned}
u_2 & = \left( -\frac{73}{6400}
+\frac{53}{1600} r^2
-\frac{269}{6400} r^4
+\frac{1}{50} r^6
-\frac{9}{400} r^6 \log(r)
\right)\\
    & + \left( 
    \frac{247}{78400} r^2
+\frac{2293}{313600} r^4
-\frac{9}{2450} r^6
+\frac{81}{2800} r^6 \log(r)
-\frac{2227}{313600} r^8 
\right) \cos(8 \theta)\mx{.}
\end{aligned}
\eeq

\subsection*{Validation}
To give a more transparent validation, we shall test (\ref{eq:perturbedserrin}) directly, without passing through the hierarchy equations or other intermediate steps. As a result, we should check that the truncated solutions satisfy (\ref{eq:perturbedserrin}) up to $O(\mu^2)$, i.e. exactly, if $O(\mu^3)$ are disregarded. \\
For this purpose, we define
\[
u^{[2]}=u_0+\mu u_1 + \mu^2 u_2 \quad ; \quad w^{[2]}=w_1 + \mu w_2 \mx{,}
\]
where the involved functions are given by (\ref{eq:radial}), (\ref{eq:exampleu1}), (\ref{eq:exampleu2}), (\ref{eq:examplew1}), and (\ref{eq:examplew2}), respectively.
\subsubsection*{Equation}
To check the equation, it is immediate to compute the following objects 
\begin{align*}
-\Delta u^{[2]} &= 1
+\mu\left(
-\frac{9}{5}r^2
+\frac{9}{5}
\right)\cos(4\theta)
+\mu^2\Bigg[
\left(
\frac{81}{100}r^4\log(r)
-\frac{9}{20}r^4
+\frac{6879}{19600}r^2
+\frac{741}{3920}
\right)\cos(8\theta)\\
&\qquad\qquad
+\left(
\frac{81}{100}r^4\log(r)
-\frac{9}{20}r^4
+\frac{269}{400}r^2
-\frac{53}{400}
\right)
\Bigg]
\\
\mu w^{[2]} u^{[2]}&=\mu 
\left(
\frac{9}{5}r^2
-\frac{9}{5}
\right)\cos(4\theta)
+\mu^2\left[
\left(
\frac{741}{3920}r^2
-\frac{741}{3920}
\right)\cos(8\theta)\right.\\
&\left.
+\left(
-\frac{81}{50}r^4\log(r)
+\frac{9}{10}r^4
-\frac{27}{25}r^2
\right)\cos^{2}(4\theta)
-\frac{53}{400}r^2
+\frac{53}{400}
\right] +O(\mu^{3})
\end{align*}
It is sufficient to sum the terms above, use $\cos^2(4 \theta)=(1+\cos(8 \theta))/2$ in the latter and disregard $O(\mu^{3})$ to get the first of (\ref{eq:perturbedserrin}).
\subsubsection*{Dirichlet boundary condition}
This is a straightforward check. One just needs to evaluate $u^{[2]}(r,\theta)|_{r=1+\mu g}$ and perform a second order Taylor expansion (for instance via a symbolic manipulation software) to get
\[
u^{[2]}(1+\mu g(\theta),\theta)=O(\mu^3) \mx{.}
\]
The behaviour of the latter as $\mu$ varies is shown in Fig. \ref{fig:validation} (c).
\subsubsection*{Neumann boundary condition}
Let us start by computing the (inner) normal unit vector to the domain's boundary. In the basis $\{e_r,e_{\theta}\}$ the tangent vector to $\pl \Omega_{\mu}$ reads as $t=(\mu g', 1+ \mu g)=(-5^{-1} \mu \sin (4 \theta), 1+ 20^{-1} \mu \cos(4 \theta) )$, from which
\[
\nu=\left(400+40 \mu \cos(4\theta)+15\mu^2 \sin^2(4\theta)\right)^{-1/2}
\left(-20 -\mu\cos(4\theta), -4 \mu\sin(4\theta) \right) \mx{.}
\]
On the other hand, 
\begin{align*}
\nabla u^{[2]}=
\Bigg( &
-\frac{r}{2}
+\mu\left(
\frac{9r^{3}\log(r)}{10}
-\frac{11r^{3}}{40}
+\frac{3r}{10}
\right)\cos(4\theta)\\
&+\mu^{2}\left[
\left(
\frac{243r^{5}\log(r)}{1400}
-\frac{2227r^{7}}{39200}
+\frac{27r^{5}}{3920}
+\frac{2293r^{3}}{78400}
+\frac{247r}{39200}
\right)\cos(8\theta)\right.\\
&\left.
-\frac{27r^{5}\log(r)}{200}
+\frac{39r^{5}}{400}
-\frac{269r^{3}}{1600}
+\frac{53r}{800}
\right],\\
&-4\mu\left(
\frac{9r^{3}\log(r)}{40}
-\frac{r^{3}}{8}
+\frac{3r}{20}
\right)\sin(4\theta)\\
&-8\mu^{2}\left(
\frac{81r^{5}\log(r)}{2800}
-\frac{2227r^{7}}{313600}
-\frac{9r^{5}}{2450}
+\frac{2293r^{3}}{313600}
+\frac{247r}{78400}
\right)\sin(8\theta)\Bigg)
 \mx{.}
\end{align*}
Similarly to the case of the already tested Dirichlet condition, the property
\[
\pl_{\nu} u^{[2]} (1+\mu g, \theta):= \nabla u^{[2]} (1+\mu g, \theta) \cdot \nu = 1/2+ O(\mu^3) \mx{,}
\]
can be checked, once more, via a second order Taylor expansion. Its behaviour is shown in Fig. \ref{fig:validation} (d).\\
The validation is now complete.
\\
\begin{figure*}[t!]\begin{center}
\vspace{0pt}
\begin{minipage}[c][1\width]{0.48\textwidth}
\hspace{0pt}
	{\begin{overpic}[width=\textwidth]{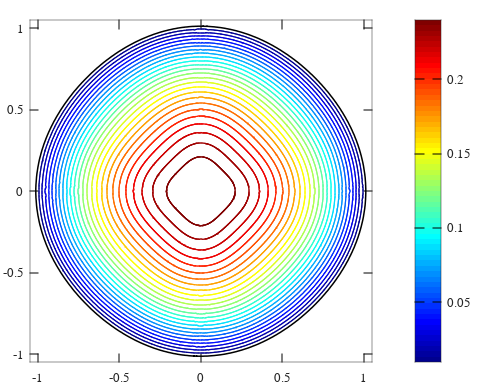}
		\put(00,82){\footnotesize (a)}
	\end{overpic}}
	\end{minipage}	
\begin{minipage}[c][1\width]{0.48\textwidth}
\hspace{10pt} 
	{\begin{overpic}[width=\textwidth]{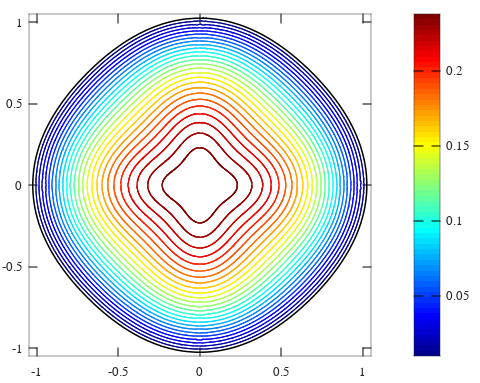}
	\end{overpic}}
	\end{minipage}\\
\vspace{-20pt}
\begin{minipage}[c][1\width]{0.48\textwidth}
\hspace{0pt}
	{\begin{overpic}[width=.95\textwidth]{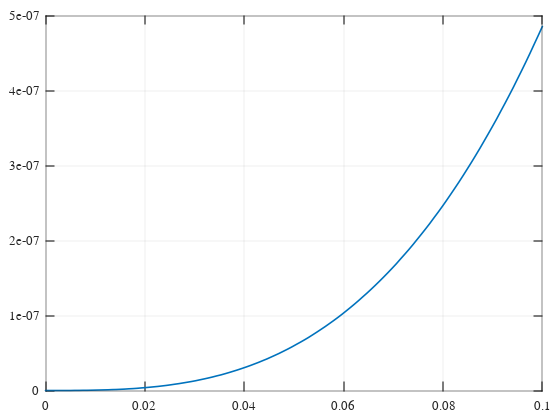}
		\put(0,80){\footnotesize (c)}
		\put(48,45){\footnotesize $\displaystyle\max_{\theta \in \mathbb{T}}\snorm{u^{[2]}|_{\pl \Omega_{\mu}}}{}$}
		\put(86,0){\footnotesize $\mu$}
	\end{overpic}}
	\end{minipage}	
\begin{minipage}[c][1\width]{0.48\textwidth}
\hspace{10pt} 
	{\begin{overpic}[width=.95\textwidth]{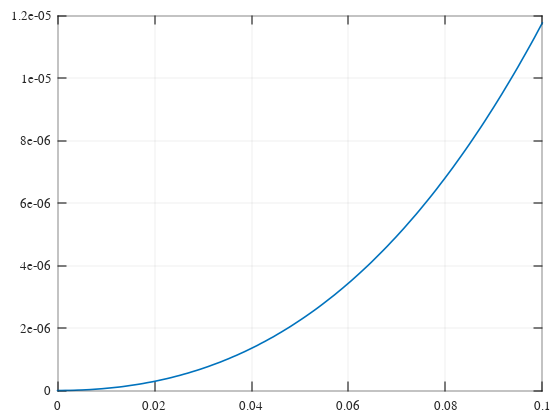}
		\put(0,80){\footnotesize (d)}
       	\put(33,45){\footnotesize $
        \displaystyle\max_{\theta \in \mathbb{T}}\snorm{     
        \pl_{\nu} u^{[2]}|_{\pl \Omega_{\mu}}}{}-\frac{1}{2}$}
  		\put(86,0){\footnotesize $\mu$}
	\end{overpic}}
	\end{minipage}
\end{center}
\vspace{-10pt}
\caption{In panels (a) and (b), the solution $u^{[2]}(r,\theta)$ plotted for $\mu=0.25$ and $\mu=0.5$, respectively. 
Panel (c) shows the plot of  $\max_{\theta \in \mathbb{T}}\snorm{u^{[2]} (1+\mu g(\theta), \theta)}{}$ as a function of $\mu$, while the behaviour of $\max_{\theta \in \mathbb{T}}\snorm{\pl_{\nu}  u^{[2]} (1+\mu g(\theta), \theta)}{}-1/2$ is shown in panel (d).}
\label{fig:validation}
\vspace{0pt}
\end{figure*}

\clearpage
\section*{Appendix}\label{appendix:a}
The formal algorithm constructed in Sec. \ref{sec:formal} is presented here in the form of pseudocode. Given $g(\theta)$ as input, the $ k $-th order perturbative corrections $ u_k(r,\theta) $ and $ w_k(r,\theta) $ can be computed assuming that $ u_j(r,\theta) $ and  $ u_j(r,\theta) $ have been computed already, for all $ j < k $, as well as their derivatives at the boundary. Note that for $k=1$, $u_0(r)$ is known and given by (\ref{eq:radial}) and $w_0$ is identically zero by construction.
\begin{tcolorbox}[title=Algorithm: Recursive Computation at order $ k \geq 1 $, breakable, colback=white]
\begin{algorithmic}[1]
\State Compute the known term $\ml{F}^{(k)}(r, \theta)$ according to (\ref{eq:source}); 
\State Compute the Dirichlet boundary value $
u_k(1, \theta)$ via (\ref{eq:dirichlet}); 
\State Compute the Neumann boundary value $\partial_r u_k(1, \theta)$ via (\ref{eq:neumanneq});
\State Expand $
u_k(1, \theta)$, $\partial_r u_k(1, \theta)$ and $\ml{F}^{(k)}$ in Fourier series according to (\ref{eq:ukfourier}), (\ref{eq:expansionfourierdirichlet}) and (\ref{eq:fkfourier}), respectively, 
\begin{align*} 
u_k(1, \theta) & \rightarrow  \left\{ \widetilde{A}_n^{(k)} \right\}_{n \geq 0} \, ; \, \left\{ \widetilde{B}_n^{(k)} \right\}_{n \geq 1}, \\
\partial_r u_k(1, \theta) & \rightarrow  \left\{ \widetilde{A}_n^{(k)\prime} \right\}_{n \geq 0}  \, ; \, \left\{ \widetilde{B}_n^{(k)\prime} \right\}_{n \geq 1}, \\
\ml{F}^{(k)} & \rightarrow \left\{ \xi_n^{(k)}(r)  \right\}_{n \geq 0}  \, ; \, \left\{ \eta_n^{(k)}(r)  \right\}_{n \geq 1}
\end{align*}
and store the computed (non-vanishing) coefficients.
\State Set $M \geq 0$ and compute the $2 M$ free coefficients $a_{\cdot,m}^{(k)}$ and $b_{\cdot,m}^{(k)}$ satisfying (\ref{eq:decayanm}), then: 
\begin{itemize}
\item $a_{0,0}^{(k)}$ with (\ref{eq:azz}),
\item $\left\{ a_{n,0}^{(k)} \right\}_{n \geq 1} $ and $\left\{ b_{n,0}^{(k)} \right\}_{n \geq 1}$ by using (\ref{eq:anm}) and (\ref{eq:bnm}), respectively,
\end{itemize}
and finally define $\left\{ a_{n}^{(k)} \right\}_{n \geq 0} $ and $\left\{ b_{n}^{(k)} \right\}_{n \geq 1}$ via (\ref{eq:anseries}). 
\State Use the coefficients above to construct $w_k(r,\theta)$ according to (\ref{eq:wkfourier}).
\State Compute $\left\{ C_n^{(k)} \right\}_{n \geq 1} $ and $\left\{ D_n^{(k)} \right\}_{n \geq 1} $ via (\ref{eq:cndn}).
\State Define $\left\{ \mathfrak{f}_n^{(k)}(r) \right\}_{n \geq 0} $ by using (\ref{eq:ceazero}) and (\ref{eq:cean}), then $\left\{ \mathfrak{g}_n^{(k)}(r) \right\}_{n \geq 1}$ with (\ref{eq:cebn}).
\State Compute $\left\{  A_n^{(k)}(r) \right\}_{n \geq 0} $ according to (\ref{eq:azeropre}) and (\ref{eq:an}), then $\left\{  B_n^{(k)}(r) \right\}_{n \geq 1} $ via (\ref{eq:bn})
\State Use the coefficients above to construct $u_k(r,\theta)$ according to (\ref{eq:ukfourier}).
\end{algorithmic}
\end{tcolorbox}

\subsection*{Acknowledgements} 
The affiliation listed for the first author corresponds to the Institution where most of the work presented in this paper was carried out, although the author is no longer affiliated with it.\\
The plots reported have been performed with GNU Octave \cite{oct} while some symbolic computations were carried out with Maxima \cite{maxima}.
\clearpage
\bibliographystyle{alpha}
\bibliography{SerrinBib.bib}

\end{document}